\newtheorem{theorem}{Theorem}
\newtheorem{lemma}[theorem]{Lemma}
\newtheorem{proposition}[theorem]{Proposition}
\theoremstyle{definition}
\newtheorem{definition}[theorem]{Definition}
\theoremstyle{remark}
\newtheorem{remark}[theorem]{Remark}
\numberwithin{theorem}{section}
\numberwithin{equation}{section}
\def\XXint#1#2#3{{\setbox0=\hbox{$#1{#2#3}{\int}$}
     \vcenter{\hbox{$#2#3$}}\kern-.5\wd0}}
\newcommand{\dd}{\; \mathrm{d}}
\begin{document}
\title[Lusin Approximation and Horizontal Curves in Carnot Groups]{Lusin Approximation and Horizontal Curves in Carnot Groups}
\author{Gareth Speight}
\address{Department of Mathematical Sciences, University of Cincinnati, Cincinnati, OH 45221, United States}
\email{Gareth.Speight@uc.edu}

\begin{abstract}
We show that, given an absolutely continuous horizontal curve $\gamma$ in the Heisenberg group, there is a $C^{1}$ horizontal curve $\Gamma$ such that $\Gamma=\gamma$ and $\Gamma'=\gamma'$ outside a set of small measure. Conversely, we construct an absolutely continuous horizontal curve in the Engel group with no $C^{1}$ horizontal approximation.
\end{abstract}
\maketitle

\section{Introduction}

A curve $\gamma\colon [a,b] \to \mathbb{R}^n$ is absolutely continuous if it is differentiable Lebesgue almost everywhere and any displacement $\gamma(t_{2})-\gamma(t_{1})$ is given by integrating the derivative $\gamma'$ from $t_{1}$ to $t_{2}$ (Definition \ref{absolutecontinuity}). This connection between position and derivative makes absolute continuity an important property in analysis and geometry.

It is well known that absolutely continuous curves in $\mathbb{R}^{n}$ admit a Lusin type approximation by $C^{1}$ curves: given any absolutely continuous curve in $\mathbb{R}^{n}$, one can find a $C^{1}$ curve with the same position and derivative except for a set of small measure (Theorem \ref{classicallusin}). In this article we investigate what happens if curves cannot move freely but instead are constrained to move in a smaller, but still rich, family of directions. 

A sub-Riemannian manifold consists of a smooth manifold together with a completely non-integrable subbundle of the tangent bundle (called the horizontal subbundle) and a smoothly varying family of inner products on the horizontal subbundle. A vector field is called horizontal if it is everywhere tangent to the horizontal subbundle. Non-integrability of the horizontal subbundle means that any tangent vector is a linear combination of vectors which arise as (possibly iterated) Lie brackets of horizontal vector fields. This non-integrability implies that distinct points in a connected sub-Riemannian manifold can be connected by absolutely continuous horizontal curves, which have tangent vectors in the horizontal subbundle (Chow-Rashevskii Theorem \cite{Cho39, Ras38}). One can then define the Carnot-Caratheodory distance between points by considering lengths of such horizontal curves. 

A much investigated class of sub-Riemannian manifolds is that of Carnot groups (Definition \ref{Carnot}). Carnot groups have the structure of a Lie group (so possess a family of translations) whose Lie algebra admits a stratification (which gives rise to a family of dilations). Such spaces have some similarities with Euclidean spaces but also many striking differences. For example, each Carnot group carries a natural Haar measure and one can prove a meaningful version of Rademacher's theorem for Lipschitz maps between Carnot groups (Pansu's Theorem \cite{Mag01, Pan89}). On the other hand, any Carnot group (except for Euclidean spaces) contains no subset of positive measure which is bi-Lipschitz equivalent to a subset of a Euclidean space \cite{LeD, Sem96}.

Since only horizontal curves are relevant, it is not appropriate to apply a classical Lusin type $C^{1}$ approximation to horizontal curves in the sub-Riemannian setting. We ask if a meaningful generalization of Theorem \ref{classicallusin} holds in Carnot groups: does any absolutely continuous horizontal curve in a Carnot group have a Lusin approximation by a $C^{1}$ horizontal curve? 

We show that the answer to this question is yes in Heisenberg groups (Definition \ref{Heisenberg} and Theorem \ref{lusinheisenberg}), the simplest and most studied type of non-Euclidean Carnot group. On the contrary, we show the answer to the question is no in the first Engel group (Definition \ref{Engel} and Theorem \ref{nolusin}); hence Theorem \ref{classicallusin} does not allow a generalization to horizontal curves in general Carnot groups.

By using exponential coordinates, every Carnot group can be viewed as $\mathbb{R}^{n}$ equipped with a particular family of vector fields (of which some are the distinguished horizontal directions); the group operation is then calculated from these vector fields using the Baker-Campbell-Hausdorff formula. Throughout this article we use such concrete representations and elementary techniques as much as possible. We now explain the main ideas in the proofs of Theorem \ref{lusinheisenberg} and Theorem \ref{nolusin}.

The proof of Theorem \ref{lusinheisenberg} in the Heisenberg group, represented in $\mathbb{R}^{2n+1}$, combines measure theoretic ideas from the proof of the classical Theorem \ref{classicallusin} (e.g. see \cite[Lemma 3.4.8]{AT04}) and geometric ideas for the Heisenberg group. One uses measure theoretic techniques to restrict to a compact set $K$ of large measure such that $\gamma'|_{K}$ is uniformly continuous and each point of $K$ is a Lebesgue point of $\gamma'$. The complement of such a compact set is a union of intervals $(a,b)$. Given such an interval, we find a horizontal $C^{1}$ curve $\Gamma\colon [a,b]\to\mathbb{R}^{2n+1}$, with $\Gamma(t)=\gamma(t)$ and $\Gamma'(t)=\gamma'(t)$ for $t=a$ and $t=b$, such that $\Gamma'(t)$ is close to $\gamma'(a)$ for all $t\in (a,b)$. We do this by using the fact that horizontal curves in the Heisenberg group are lifts of arbitrary curves in $\mathbb{R}^{2n}$ (Lemma \ref{lift}). The height of such a lift is easily prescribed since it is given by an integral which can be interpreted as a signed area in $\mathbb{R}^{2n}$ (Lemma \ref{area}).

The proof of Theorem \ref{nolusin} in the Engel group uses carefully the form of the horizontal vector fields \eqref{engelvectors}. The horizontal vector fields $X_{1}$ and $X_{2}$ have the property that if a horizontal curve $\gamma$ has a direction close to $X_{2}$, then the component $\gamma_{4}$ will always be increasing. However, for example by following the direction $-X_{2}$, the component $\gamma_{4}$ definitely can decrease for a general horizontal curve. We exploit this observation to construct an absolutely continuous horizontal curve and a dense set $[0,1]\setminus S$ of large measure but empty interior for which the following holds: $\gamma$ follows the vector field $X_{2}$ on $[0,1]\setminus S$, but the component $\gamma_{4}$ is strictly decreasing on $[0,1]\setminus S$ (Proposition \ref{badcurveproperties}). We then show that, given a $C^{1}$ horizontal curve $\Gamma$, the set $\{t\in [0,1]\setminus S:\Gamma(t)=\gamma(t)\}$ has no Lebesgue density points and hence has measure zero.

To conclude this introduction we give those definitions which will be important throughout the paper. Note that the phrases `almost every point' or `almost everywhere' will mean for all points except those in a subset of $\mathbb{R}$ with Lebesgue measure zero.

We now give basic information about absolutely continuous functions and Lusin type approximations, as can be found in \cite{EG91, Rud86}.

\begin{definition}\label{absolutecontinuity}
A curve $\gamma\colon [a,b]\to \mathbb{R}^{n}$ is absolutely continuous if it is differentiable almost everywhere and
\[\gamma(t_{2})=\gamma(t_{1})+\int_{t_{1}}^{t_{2}} \gamma'(t) \dd t\]
whenever $t_{1}, t_{2}\in [a,b]$. A curve in a smooth manifold is absolutely continuous if (locally) its image under any chart map is absolutely continuous.
\end{definition}

An equivalent definition of absolute continuity in $\mathbb{R}^n$ is given by requiring that for every $\varepsilon>0$ there exists $\delta>0$ such that the following holds: whenever $(a_{i},b_{i})$ is a finite pairwise disjoint sequence of subintervals of $[a,b]$ satisfying
\[\sum_{i} (b_{i}-a_{i})<\delta,\]
we have
\[\sum_{i} |\gamma(b_{i})-\gamma(a_{i})|<\varepsilon.\]
From this definition it is immediate that every Lipschitz curve is absolutely continuous. The Lusin type $C^{1}$ approximation theorem for absolutely continuous mappings is as follows.

\begin{theorem}\label{classicallusin}
Suppose $\gamma\colon [a,b]\to \mathbb{R}^{n}$ is absolutely continuous and $\varepsilon>0$. Then there exists a $C^1$ curve $\Gamma\colon [a,b]\to \mathbb{R}^{n}$ such that:
\[\mathcal{L}^1 (\{t\in [a,b]:\Gamma(t)\neq \gamma(t) \mbox{ or }\Gamma'(t)\neq \gamma'(t)\})<\varepsilon.\]
\end{theorem}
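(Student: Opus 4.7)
My plan is to combine three classical ingredients: Lusin's theorem applied to $\gamma'\in L^{1}([a,b];\mathbb{R}^{n})$, the Lebesgue differentiation theorem, and the one-dimensional Whitney $C^{1}$ extension theorem. The idea is to restrict $\gamma$ and $\gamma'$ to a compact set $K\subseteq[a,b]$ of measure at least $b-a-\varepsilon$, verify that the pair $(\gamma|_{K},\gamma'|_{K})$ satisfies the first-order Whitney consistency condition, and then apply Whitney's theorem to obtain a $C^{1}$ curve $\Gamma\colon[a,b]\to\mathbb{R}^{n}$ with $\Gamma=\gamma$ and $\Gamma'=\gamma'$ on $K$. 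Since the exceptional set is then contained in $[a,b]\setminus K$, this gives what is wanted.

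First, since $\gamma$ is absolutely continuous, $\gamma'\in L^{1}$. Lusin's theorem furnishes a compact set $K_{0}\subseteq[a,b]$ with $\mathcal{L}^{1}([a,b]\setminus K_{0})<\varepsilon/2$ on which $\gamma'$ is (uniformly) continuous. By the Lebesgue differentiation theorem, almost every point of $[a,b]$ is a Lebesgue point of $\gamma'$, and almost every point of $K_{0}$ is a Lebesgue density point of $K_{0}$. Removing a null set from $K_{0}$ and then applying Egorov's theorem to the functions
\[
\varphi_{r}(s):=\frac{1}{r}\int_{s-r}^{s+r}|\gamma'(u)-\gamma'(s)|\dd u,
\]
I would pass to a compact subset $K\subseteq K_{0}$ with $\mathcal{L}^{1}([a,b]\setminus K)<\varepsilon$ on which $\varphi_{r}(s)\to 0$ uniformly in $s$ as $r\to 0$ and on which the density of $K$ at each point tends to $1$ uniformly.

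Next, I would verify the Whitney consistency condition: for $s,t\in K$,
\[
\gamma(t)-\gamma(s)-\gamma'(s)(t-s)=\int_{s}^{t}\bigl(\gamma'(u)-\gamma'(s)\bigr)\dd u,
\]
and one needs this remainder to be $o(|t-s|)$ uniformly in $s,t\in K$ as $|t-s|\to 0$. Splitting the right-hand integral into the portions over $[s,t]\cap K$ and $[s,t]\setminus K$, the first is controlled by the uniform continuity of $\gamma'|_{K}$, while the second is controlled by the uniform density-$1$ property of $K$ together with equiabsolute continuity of $\int|\gamma'|$. Combined with the continuity of $\gamma'$ on $K$, this is precisely the hypothesis of the one-dimensional Whitney $C^{1}$ extension theorem, which produces the desired $\Gamma$.

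The main technical obstacle is the uniformity in the Whitney estimate: one must arrange that the uniform continuity of $\gamma'|_{K}$, the uniform convergence of the Lebesgue-point averages $\varphi_{r}$, and the uniform density-$1$ property of $K$ at its points all hold simultaneously. This is the reason for the successive applications of Egorov's theorem and passage to compact subsets, each costing only an arbitrarily small amount of measure; this packaging is essentially the content of the measure-theoretic lemma \cite[Lemma 3.4.8]{AT04} cited in the introduction.
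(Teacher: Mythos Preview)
The paper does not actually prove Theorem~\ref{classicallusin}; it is stated as a classical fact with pointers to \cite{EG91, Rud86} and, for the measure-theoretic packaging, to \cite[Lemma~3.4.8]{AT04}. So there is no ``paper's own proof'' to compare against here. Your outline is a correct and standard route to the result.

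Two remarks. First, your verification of the Whitney remainder is more elaborate than necessary: once you have, via Egorov, that $\varphi_{r}(s)\to 0$ uniformly for $s\in K$, the estimate
\[
\Bigl|\gamma(t)-\gamma(s)-\gamma'(s)(t-s)\Bigr|\le\int_{\min(s,t)}^{\max(s,t)}|\gamma'(u)-\gamma'(s)|\dd u\le |t-s|\,\varphi_{|t-s|}(s)
\]
already gives the uniform $o(|t-s|)$ condition directly, with no need to split into $[s,t]\cap K$ and $[s,t]\setminus K$. In fact, the splitting argument as you describe it has a soft spot: ``equiabsolute continuity of $\int|\gamma'|$'' only gives that $\int_{[s,t]\setminus K}|\gamma'|\to 0$ as the measure of the set shrinks, not that it is $o(|t-s|)$; you would again have to appeal to the uniform $\varphi_r$ control to get the right rate, so you might as well use it from the start.

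Second, it is worth contrasting your black-box use of Whitney extension with what the paper does for the Heisenberg analogue, Theorem~\ref{lusinheisenberg}. There the same measure-theoretic setup (compact $K$, $\gamma'|_K$ uniformly continuous, uniform Lebesgue-point averages) is established, but instead of invoking Whitney, the paper builds the $C^{1}$ interpolant by hand on each complementary interval $(a_i,b_i)$. This is forced in the Heisenberg setting because a generic Whitney extension would not be horizontal; for the Euclidean statement your Whitney shortcut is perfectly adequate and cleaner.
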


We now give some basic facts about Carnot groups; more information can be found in \cite{ABB, B94, BLU07, LeD, Mon02, Vit14}. A Lie group is a smooth manifold with the structure of a group, for which the group operations are smooth. The Lie algebra of a Lie group consists of left invariant vector fields (which can be identified with the tangent space at the identity). The Lie bracket of smooth vector fields $X, Y$ on a smooth manifold is another smooth vector field $[X,Y]$, defined on smooth functions by $[X,Y](f)=X(Y(f))-Y(X(f))$. The abstract definition of Carnot group is as follows, though later we will work with concrete representations in $\mathbb{R}^{n}$.

\begin{definition}\label{Carnot}
A Carnot group $\mathbb{G}$ of step $s$ is a simply connected Lie group whose Lie algebra $\mathcal{G}$ admits a stratification of the form
\[\mathcal{G}=V_{1}\oplus V_{2}\oplus \ldots \oplus V_{s}\]
such that $V_{i}=[V_{1},V_{i-1}]$ for any $i=2, \ldots, s$, and $[V_{1},V_{s}]=0$.
We refer to $V_{1}$ as the horizontal layer and say that an absolutely continuous curve in $\mathbb{G}$ is horizontal if its derivative belongs to $V_{1}$ at almost every point.
\end{definition}

Using the stratification of the Lie algebra, one can define a family of dilations on any Carnot group. We do not use them in this article but they have an important role in the geometry of Carnot groups. Since the Lie group composition is smooth, left translations $x\mapsto p\cdot x$ map absolutely continuous ($C^{1}$) curves to absolutely continuous ($C^{1}$) curves. Further, since the Lie algebra consists of left invariant vector fields, such translations map horizontal vectors (curves) to horizontal vectors (curves).

Recall that the exponential map of a Lie group is a mapping $\exp\colon \mathcal{G}\to \mathbb{G}$; it is defined by $\exp(X)=\gamma(1)$, where $\gamma\colon (\mathbb{R},+)\to (\mathbb{G},\cdot)$ is the unique group homomorphism such that $\gamma'(0)=X$. If $X$ is interpreted as a left invariant vector field, such a curve $\gamma$ is equivalently defined as the solution of the Cauchy problem $\gamma(0)=0, \gamma'(t)=X(\gamma(t))$.

For a Carnot group the exponential map is a diffeomorphism. This allows one to identify any Carnot group with its Lie algebra:
\[X\in \mathcal{G} \longleftrightarrow \exp(X)\in \mathbb{G}.\]
Adopting such an identification, the group law can be calculated from the Lie brackets using the Baker-Campbell-Hausdorff formula. We need this formula only for groups of step at most 3, for which it takes the form:
\[\exp(X)\cdot \exp(Y)=\exp \left( X+Y+\frac{1}{2}[X,Y]+\frac{1}{12}([X,[X,Y]]+[Y,[Y,X]])\right).\]
The Lie algebra of a Carnot group is a finite dimensional vector space. Hence, by choosing a basis, it can be identified with $\mathbb{R}^{n}$. Thus every Carnot group can be represented as $\mathbb{R}^n$ equipped with a suitable family of vector fields, from which the group operation is calculated using the Baker-Campbell-Hausdorff formula. We will represent both the Heisenberg group (Definition \ref{Heisenberg}) and the Engel group (Definition \ref{Engel}) in this way. 

\begin{remark}
We prove a positive result in the Heisenberg group (a Carnot group of step 2) and a negative result in the Engel group (a Carnot group of step 3). Hence our analysis of $C^{1}$ approximations leaves open the case of general step 2 Carnot groups; such groups are rather special because their group operation and vector fields can be written explicitly \cite{BLU07}. With a more algebraic argument, it may be possible to determine whether Theorem \ref{lusinheisenberg} holds for general step 2 Carnot groups.

Further, it may be possible to strengthen Theorem \ref{nolusin} by constructing an absolutely continuous horizontal curve in the Engel group which meets every $C^{1}$ horizontal curve in a set of measure zero (rather than a set of small measure). However, such a construction is likely to be more complicated and was not the main motivation behind this article.
\end{remark}

\noindent \textbf{Acknowledgement.} This work was carried out with the support of the grant ERC ADG GeMeThNES. The author thanks Luigi Ambrosio for asking the question to him and for much guidance during his time in Pisa. 

The author also thanks Francesco Serra Cassano for initially raising the problem and Andrea Pinamonti for useful conversations on related topics. 

The author is grateful to kind referees who read this article very carefully and suggested numerous corrections and improvements to the exposition.

\section{Lusin Approximation in the Heisenberg Group}

We now give the abstract definition of the Heisenberg group, followed by a representation in coordinates \cite{BLU07, CDPT07}.

\begin{definition}\label{Heisenberg}
The Heisenberg group $\mathbb{H}^{n}$ is the Carnot group whose Lie algebra $\mathcal{H}$ satisfies the decomposition
\[ \mathcal{H}=V_{1}\oplus V_{2},\]
where $V_{1}=\mathrm{span}\{X_{i},Y_{i}:1\leq i\leq n\}$, $V_{2}=\mathrm{span}\{T\}$, and the only non-trivial Lie brackets are
\[[X_{i},Y_{i}]=-4T, \qquad 1\leq i\leq n.\]
\end{definition}

The Heisenberg group $\mathbb{H}^{n}$ can be represented in coordinates by $\mathbb{R}^{2n+1}$, whose points we denote by $(x,y,t)$ with $x,y\in \mathbb{R}^{n}$ and $t\in \mathbb{R}$. The group law is given by:
\[(x,y,t)\cdot (x',y',t')=\left(x+x',y+y',t+t'+2\sum_{i=1}^{n}(y_{i}x_{i}'-x_{i}y_{i}')\right).\]
The identity is $(0,0,0)$ and inverses are given by $(x,y,t)^{-1}=(-x,-y,-t)$. The vector fields are:
\begin{equation}\label{heisenbergvectors}
X_{i}=\partial_{x_{i}}+2y_{i}\partial_{t}, \quad Y_{i} = \partial_{y_{i}}-2x_{i}\partial_{t}, \quad 1\leq i\leq n, \quad T=\partial_{t}.
\end{equation}
Here $\partial_{x_{i}}, \partial_{y_{i}}$ and $\partial_{t}$ denote the coordinate vectors in $\mathbb{R}^{2n+1}$, which may be interpreted as operators on differentiable functions.

We recall that an absolutely continuous curve $\gamma$ in the Heisenberg group is horizontal if at almost every point $t$ the derivative $\gamma'(t)$ is a linear combination of the vectors $X_{i}(\gamma(t)), Y_{i}(\gamma(t)), 1\leq i\leq n$. A vector in $\mathbb{R}^{2n+1}$ is horizontal at $\zeta \in \mathbb{R}^{2n+1}$ if it is a linear combination of the vectors $X_{i}(\zeta), Y_{i}(\zeta), 1\leq i\leq n$. In this section we prove the following result.

\begin{theorem}\label{lusinheisenberg}
Suppose $\gamma\colon [0,1]\to \mathbb{R}^{2n+1}$ is an absolutely continuous horizontal curve in the Heisenberg group. Then, for any $\varepsilon>0$, there exists a $C^{1}$ horizontal curve $\Gamma\colon [0,1]\to \mathbb{R}^{2n+1}$ such that:
\[\mathcal{L}^{1} (\{t\in [0,1]:\Gamma(t)\neq \gamma(t)\mbox{ or }\Gamma'(t)\neq \gamma'(t)\})<\varepsilon.\]
\end{theorem}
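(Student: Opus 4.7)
\noindent\textbf{Proof plan for Theorem \ref{lusinheisenberg}.}

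The plan is to follow the outline sketched in the introduction: use measure-theoretic reductions as in the classical case to restrict to a compact ``good'' set $K\subset [0,1]$, and then construct a $C^{1}$ horizontal bridging curve on each complementary interval. Concretely, I would first fix $\varepsilon>0$ and produce a compact set $K\subset[0,1]$ with $\mathcal{L}^{1}([0,1]\setminus K)<\varepsilon$, such that $\gamma$ is differentiable at every point of $K$, every $t\in K$ is a Lebesgue point of $\gamma'$, and $\gamma'|_{K}$ is uniformly continuous. This is a standard consequence of Lusin's theorem and the Lebesgue differentiation theorem applied to the $L^{1}$ derivative $\gamma'$. The open complement $[0,1]\setminus K$ decomposes into an at most countable family of disjoint intervals $(a_{k},b_{k})$, and on $K$ one sets $\Gamma=\gamma$.

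The core of the argument is the construction, on each fixed interval $[a,b]=[a_{k},b_{k}]$, of a $C^{1}$ horizontal curve $\Gamma\colon[a,b]\to\mathbb{R}^{2n+1}$ such that $\Gamma(a)=\gamma(a)$, $\Gamma(b)=\gamma(b)$, $\Gamma'(a)=\gamma'(a)$, $\Gamma'(b)=\gamma'(b)$, and $\|\Gamma'(t)-\gamma'(a)\|$ is uniformly small on $[a,b]$ (in terms of the modulus of continuity of $\gamma'|_{K}$). Using the lifting lemma (Lemma \ref{lift}), the horizontal curve $\Gamma$ is entirely determined by its planar projection $\pi\Gamma=(\Gamma_{1},\dots,\Gamma_{2n})$ and its initial height $\Gamma_{2n+1}(a)$; the height component is then given by the area integral
\[\Gamma_{2n+1}(t)=\gamma_{2n+1}(a)+2\int_{a}^{t}\sum_{i=1}^{n}\bigl(\Gamma_{n+i}\Gamma_{i}'-\Gamma_{i}\Gamma_{n+i}'\bigr)\dd s.\]
So it suffices to construct the planar projection suitably: I would first choose a naive $C^{1}$ interpolant $\pi\Gamma_{0}$ of the planar endpoint data $(\pi\gamma(a),\pi\gamma(b))$ with prescribed derivatives $(\pi\gamma'(a),\pi\gamma'(b))$, for instance a cubic Hermite spline, or even a curve close to the straight segment from $\pi\gamma(a)$ to $\pi\gamma(b)$ with short smooth ``caps'' at the endpoints to realise the prescribed tangents. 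Because $a,b\in K$ and $\gamma'|_{K}$ is uniformly continuous, $\pi\gamma'(a)$ and $\pi\gamma'(b)$ are close, and $(\pi\gamma(b)-\pi\gamma(a))/(b-a)$ is close to $\pi\gamma'(a)$, so such an interpolant can be chosen with derivative uniformly close to $\pi\gamma'(a)$ on $[a,b]$.

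The main obstacle, and the step where the area lemma (Lemma \ref{area}) is essential, is matching the height: in general the lifted height $\Gamma_{0,2n+1}(b)$ produced by this naive projection does not equal $\gamma_{2n+1}(b)$. The correction I would use is to add to $\pi\Gamma_{0}$ a small loop (or a pair of loops) supported in a subinterval of $(a,b)$, whose signed area, interpreted via the integral above, exactly compensates the discrepancy $\gamma_{2n+1}(b)-\Gamma_{0,2n+1}(b)$. Since $\gamma$ itself is horizontal, this discrepancy can be estimated by the difference of two area integrals over $[a,b]$, each controlled by the modulus of continuity of $\gamma'|_{K}$; one checks that one can scale the loop so that its diameter, and the perturbation it induces in $\pi\Gamma'$, remain small compared to that same modulus. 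This preserves both horizontality and the smallness of $\|\Gamma'-\gamma'(a)\|$ on $[a,b]$.

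Finally I would verify that the piecewise definition yields a global $C^{1}$ horizontal $\Gamma$ on $[0,1]$: horizontality on each bridging interval is automatic from the lift construction, horizontality a.e.\ on $K$ is inherited from $\gamma$, and $C^{1}$ matching at each endpoint $a_{k},b_{k}\in K$ follows from $\Gamma'(a_{k})=\gamma'(a_{k})$ together with continuity of $\gamma'|_{K}$ at Lebesgue points of $\gamma'$ (the Lebesgue point property is what ensures that $\Gamma'(t)\to\gamma'(t_{0})$ as $t\to t_{0}\in K$, even when $t$ approaches $t_{0}$ through the bridging intervals, since the diameters of those intervals shrink to zero near $t_{0}$ and the bridging derivative stays within a modulus of $\gamma'$ at their left endpoints). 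Since $\Gamma=\gamma$ and $\Gamma'=\gamma'$ on $K$, the exceptional set has measure at most $\mathcal{L}^{1}([0,1]\setminus K)<\varepsilon$, completing the proof. The most delicate point, as indicated, is the height-correcting loop: one needs it small enough in $C^{1}$-norm while producing a prescribed signed area, which is precisely what Lemma \ref{area} is designed to provide.
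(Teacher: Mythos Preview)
Your plan is correct and follows essentially the same route as the paper: the same measure-theoretic reduction to a compact good set $K$, the same use of the lifting Lemma~\ref{lift} to reduce the bridging problem to a planar one, and the same idea of correcting the height via a signed-area perturbation as in Lemma~\ref{area}. The paper's execution differs only in technical detail: it first proves a quantitative estimate (Lemma~\ref{studyofcurve}) showing the height discrepancy is $O(\eta(|v|+\eta)(b-a)^{2})$, and its planar construction (Lemma~\ref{plane}) splits into two cases according to whether $|v|\le 2\eta$ or $|v|>2\eta$, using a circular loop in the first case and a sine bump along the direction of motion in the second---a refinement you would discover once you tried to make ``a small loop'' give derivative error $O(\eta)$ rather than $O(\sqrt{\eta})$.
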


Before proving Theorem \ref{lusinheisenberg} we give some well known statements about horizontal curves in the Heisenberg group. The next lemma states that horizontal curves are given by suitable lifts of curves in $\mathbb{R}^{2n}$. It follows immediately from the form of the vector fields \eqref{heisenbergvectors}.

\begin{lemma}\label{lift}
An absolutely continuous curve $\gamma\colon [a,b]\to \mathbb{R}^{2n+1}$ is a horizontal curve in the Heisenberg group if and only if for $t\in [a,b]$:
\[\gamma_{2n+1}(t)=\gamma_{2n+1}(a)+2\sum_{i=1}^{n}\int_{a}^{t} (\gamma_{i}'\gamma_{n+i}-\gamma_{n+i}'\gamma_{i}).\]
\end{lemma}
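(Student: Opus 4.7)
The plan is to unfold the definition of horizontality at almost every $t$ and then use absolute continuity to integrate the resulting ODE for the last component.

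First I would use the explicit form of the horizontal vector fields \eqref{heisenbergvectors}. If $\gamma$ is horizontal, then at almost every $t$ we can write
\[\gamma'(t)=\sum_{i=1}^{n}\bigl(a_{i}(t)X_{i}(\gamma(t))+b_{i}(t)Y_{i}(\gamma(t))\bigr)\]
for some measurable coefficients $a_{i},b_{i}$. Reading off the first $2n$ coordinates forces $a_{i}(t)=\gamma_{i}'(t)$ and $b_{i}(t)=\gamma_{n+i}'(t)$. Substituting these back and reading off the last coordinate gives
\[\gamma_{2n+1}'(t)=2\sum_{i=1}^{n}\bigl(\gamma_{i}'(t)\gamma_{n+i}(t)-\gamma_{n+i}'(t)\gamma_{i}(t)\bigr)\]
at almost every $t$. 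Since $\gamma$ is absolutely continuous and hence bounded on $[a,b]$, the right-hand side is the derivative of the absolutely continuous function $2\sum_{i}\int_{a}^{\cdot}(\gamma_{i}'\gamma_{n+i}-\gamma_{n+i}'\gamma_{i})$ (recall that the product of a bounded absolutely continuous function with an $L^{1}$ function is in $L^{1}$, and the indefinite integral of an $L^{1}$ function is absolutely continuous). Applying the fundamental theorem of calculus for absolutely continuous functions to $\gamma_{2n+1}$ then yields the stated integral identity.

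For the converse, suppose the integral identity holds for all $t\in[a,b]$. Differentiating almost everywhere (again by the fundamental theorem of calculus, valid because $\gamma$ is absolutely continuous so $\gamma_{i}',\gamma_{n+i}'$ exist a.e.\ and $\gamma_{i},\gamma_{n+i}$ are bounded) gives
\[\gamma_{2n+1}'(t)=2\sum_{i=1}^{n}\bigl(\gamma_{i}'(t)\gamma_{n+i}(t)-\gamma_{n+i}'(t)\gamma_{i}(t)\bigr)\]
at almost every $t$. Combining this with the trivial identities $\gamma_{i}'(t)=\gamma_{i}'(t)$ and $\gamma_{n+i}'(t)=\gamma_{n+i}'(t)$, we verify directly from \eqref{heisenbergvectors} that
\[\gamma'(t)=\sum_{i=1}^{n}\bigl(\gamma_{i}'(t)X_{i}(\gamma(t))+\gamma_{n+i}'(t)Y_{i}(\gamma(t))\bigr)\]
at almost every $t$, which is exactly horizontality.

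There is no real obstacle here; the lemma is essentially a bookkeeping exercise translating the pointwise horizontality condition into an integral condition on the last coordinate. The only point requiring a small word of care is the interplay between a.e.\ differentiation and the integral formula, which is handled by the standard fundamental theorem of calculus for absolutely continuous functions.
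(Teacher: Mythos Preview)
Your proof is correct and follows essentially the same approach as the paper: expand $\gamma'$ in the horizontal basis, read off the first $2n$ coordinates to identify the coefficients as $\gamma_i'$ and $\gamma_{n+i}'$, and then use the fundamental theorem of calculus for absolutely continuous functions to pass between the pointwise ODE for $\gamma_{2n+1}'$ and the integral formula. You are slightly more explicit than the paper about the converse direction and the integrability of the integrand, but there is no substantive difference.
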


\begin{proof}
By definition, $\gamma$ is horizontal if and only if there exist functions $A_{i}, B_{i}\colon [a,b] \to \mathbb{R}$, $1\leq i\leq n$, such that for almost every $t\in [a,b]$:
\[\gamma'(t)=\sum_{i=1}^n (A_{i}(t)X_{i}(\gamma(t))+B_{i}(t)Y_{i}(\gamma(t))).\]
Using $\eqref{heisenbergvectors}$, the right hand side of this expression is exactly
\[(A_{1}(t), \ldots, A_{n}(t), B_{1}(t), \ldots, B_{n}(t), \sum_{i=1}^{n} (2A_{i}(t)\gamma_{n+i}(t)-2B_{i}(t)\gamma_{i}(t))).\]
By examining the initial coordinates we see $\gamma_{i}'(t)=A_{i}(t)$ and $\gamma_{n+i}'(t)=B_{i}(t)$ for $1\leq i\leq n$. Hence we deduce:
\[\gamma_{2n+1}'(t)= 2\sum_{i=1}^{n} (\gamma_{i}'(t)\gamma_{n+i}(t)- \gamma_{n+i}'(t)\gamma_{i}(t))\]
for almost every $t\in [a,b]$. The claim then follows by integration.
\end{proof}

The integrals above have a geometric interpretation; if the curve starts at $0$ and is smooth enough to apply Stokes' theorem, then each one is given by a signed area in $\mathbb{R}^2$.

\begin{lemma}\label{area}
Suppose $\sigma\colon [a,b]\to \mathbb{R}^{2}$ is a smooth curve with $\sigma(a)=0$. Let $[0,\sigma(b)]$ be the straight line from $0$ to $\sigma(b)$ and let $A_{\sigma}$ denote the signed area of the region enclosed by $\sigma$ and $[0,\sigma(b)]$. Then:
\[A_{\sigma}=\frac{1}{2}\int_{a}^{b}(\sigma_{1}\sigma_{2}'-\sigma_{2}\sigma_{1}').\]
\end{lemma}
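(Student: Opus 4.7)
The plan is to recognize this as a standard application of Green's theorem (equivalently, the shoelace formula written in integral form), where the closed boundary of the region is obtained by concatenating $\sigma$ with the straight segment from $\sigma(b)$ back to $0$.

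First I would form the closed piecewise smooth curve $C$ consisting of $\sigma$ on $[a,b]$ followed by the segment $\ell(s)=(1-s)\sigma(b)$ for $s\in [0,1]$. Provided $\sigma$ is simple (and one orients appropriately, adjusting signs for self-intersections), Green's theorem gives the enclosed signed area as
\[
A_{\sigma}=\frac{1}{2}\oint_{C}(x\dd y-y\dd x).
\]
Splitting the contour integral into its contributions from $\sigma$ and from $\ell$, the $\sigma$ part immediately yields
\[
\frac{1}{2}\int_{a}^{b}(\sigma_{1}\sigma_{2}'-\sigma_{2}\sigma_{1}'),
\]
which is the desired right-hand side.

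It then remains to check that the straight segment $\ell$ contributes nothing. This is the short computation: on $\ell$ we have $x=(1-s)\sigma_{1}(b)$, $y=(1-s)\sigma_{2}(b)$, so $\dd x=-\sigma_{1}(b)\dd s$, $\dd y=-\sigma_{2}(b)\dd s$, and
\[
x\dd y-y\dd x=-(1-s)\sigma_{1}(b)\sigma_{2}(b)\dd s+(1-s)\sigma_{2}(b)\sigma_{1}(b)\dd s=0.
\]
Hence the full contour integral reduces to the integral along $\sigma$, proving the formula.

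The only genuine subtlety is dealing with curves $\sigma$ that are not simple, since Green's theorem in its cleanest form applies to simple closed curves. The customary resolution, and the one I would adopt here, is to interpret $A_\sigma$ as the \emph{signed} area counted with multiplicity (winding number) of the closed curve $C$; the identity $\tfrac12\oint_C(x\dd y-y\dd x)=\int_{\mathbb R^2}\mathrm{wind}_C$ then holds for any smooth closed curve, and the segment contribution is still zero by the computation above. Alternatively, one can simply take the lemma as a definition of the signed area $A_\sigma$ via this integral, which is all that is used in the sequel.
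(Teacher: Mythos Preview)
Your proof is correct and follows essentially the same route as the paper: both close up the curve with the segment $[0,\sigma(b)]$, apply Stokes/Green to the form $\tfrac12(x\,\mathrm{d}y-y\,\mathrm{d}x)$, and observe that the segment contributes nothing. The paper phrases the last step as ``any line through the origin lies in the kernel of $\alpha$'' while you compute it by parametrization, and your remark on the non-simple case (interpreting $A_\sigma$ via winding number, or simply taking the formula as the definition) is a welcome clarification that the paper glosses over.
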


\begin{proof}
Define a differential 1-form by $\alpha=\frac{1}{2}(x\mathrm{d} y-y\mathrm{d} x)$ and notice that $\mathrm{d}\alpha=\mathrm{d} x\wedge \mathrm{d} y$ is the area form. Any straight line through the origin, in particular $[0,\sigma(b)]$, is contained in the kernel of $\alpha$. Since $\sigma$ followed by $[0,\sigma(b)]$ gives a closed curve we may apply Stokes' theorem:
\[\frac{1}{2}\int_{a}^{b}(\sigma_{1}\sigma_{2}'-\sigma_{2}\sigma_{1}')=\frac{1}{2}\int_{\sigma} \alpha =\frac{1}{2}\int_{\sigma\cup [0,\sigma(b)]} \alpha=\frac{1}{2}\int_{D_{\sigma}} \mathrm{d} \alpha=A_{\sigma}.\]
\end{proof}

Fix an absolutely continuous horizontal curve $\gamma\colon [0,1]\to \mathbb{R}^{2n+1}$ in the Heisenberg group. Then $\gamma'(t_{0})$ exists and is a horizontal vector for almost every $t_{0} \in [0,1]$. Since $\gamma$ is absolutely continuous we know that $\gamma'$ is integrable, hence almost every $t_{0}\in (0,1)$ is a Lebesgue point of $\gamma'$; in particular:
\[\lim_{r\downarrow 0} \frac{1}{r}\int_{t_{0}}^{t_{0}+r} |\gamma'-\gamma'(t_{0})| =0.\]
Now let $\varepsilon > 0$; using standard measure theoretic arguments and Lusin's theorem, we find a compact set $K \subset (0,1)$ such that $\mathcal{L}^1 ([0,1]\setminus K)<\varepsilon$ and the following statements hold:
\begin{itemize}
\item $\gamma'(t_{0})$ exists and is a horizontal vector at every $t_{0} \in K$.
\item $\gamma'|_{K}$ is uniformly continuous.
\item Each $t_{0}\in K$ is a Lebesgue point of $\gamma'$ with averages converging uniformly. More precisely: for every $\eta>0$ there exists $\delta(\eta)>0$ such that $t_{0}\in K$ and $0<r<\delta(\eta)$ implies
\[\int_{t_{0}}^{t_{0}+r} |\gamma'-\gamma'(t_{0})| \leq \eta r.\]
\end{itemize}

Let $m=\min K$ and $M=\max K$. To prove Theorem \ref{lusinheisenberg} it suffices to find a $C^1$ horizontal curve $\Gamma \colon [m,M]\to \mathbb{R}^{2n+1}$ such that $\Gamma=\gamma$ and $\Gamma'=\gamma'$ on $K$. After this is done one can simply extend $\Gamma$ arbitrarily to a $C^1$ horizontal curve on $[0,1]$, since $\mathcal{L}^{1}([0,1]\setminus [m,M])<\varepsilon$. 

In the following lemma we study the curve $\gamma$ near points of $K$. We pay particular attention to movement in non horizontal directions; using group translations we bring the point of interest to $0$, where the horizontal directions in $\mathbb{R}^{2n+1}$ correspond to vectors of the form $(v,0)$ for $v\in \mathbb{R}^{2n}$.

\begin{lemma}\label{studyofcurve}
There is a constant $C$ for which the following holds. Let $\eta>0$ and $t_{0}\in K$. Denote $\varphi=\gamma(t_{0})^{-1}\gamma$ so that $\varphi(t_{0})=0$. Then for any $t_{0}<t<t_{0}+\delta(\eta)$:
\begin{itemize}
\item $|\gamma(t)-\gamma(t_{0}) - (t-t_{0})\gamma'(t_{0})| \leq \eta (t-t_{0})$.
\item $|\varphi(t)-(t-t_{0})(v,0)| \leq C \eta (t-t_{0})$ where $v=(\gamma_{1}'(t_{0}),\ldots, \gamma_{2n}'(t_{0}))$.
\item $|\varphi_{2n+1}(t)|\leq C\eta(|v|+\eta)(t-t_{0})^{2}$.
\end{itemize}
\end{lemma}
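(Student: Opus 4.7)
The plan is to obtain the three estimates in order: the first follows directly from absolute continuity; bullet 3 follows from a careful expansion of the integrand in the lift formula, and bullet 2 (in the last coordinate) is then a by-product.

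For the first bullet, absolute continuity yields
$\gamma(t) - \gamma(t_0) - (t-t_0)\gamma'(t_0) = \int_{t_0}^{t}(\gamma'(s) - \gamma'(t_0))\dd s,$
so the uniform Lebesgue point bound $\int_{t_0}^{t_0+r}|\gamma'-\gamma'(t_0)| \leq \eta r$ for $r < \delta(\eta)$ gives the estimate after taking norms. For $\varphi = \gamma(t_0)^{-1}\gamma$, the explicit Heisenberg group law shows that left translation by $\gamma(t_0)^{-1}$ acts as an ordinary Euclidean shift on the first $2n$ coordinates, so $\varphi_i(t) = \gamma_i(t) - \gamma_i(t_0)$ for $i \leq 2n$, and bullet 1 immediately gives $|(\varphi_1,\ldots,\varphi_{2n})(t) - (t-t_0)v| \leq \eta(t-t_0)$. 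It remains to control $\varphi_{2n+1}(t)$. Since $\varphi$ is the left translate of a horizontal curve, it is itself horizontal with $\varphi(t_0) = 0$, so Lemma \ref{lift} gives
$\varphi_{2n+1}(t) = 2\sum_{i=1}^{n}\int_{t_0}^{t}\bigl(\gamma_i'(s)\varphi_{n+i}(s) - \gamma_{n+i}'(s)\varphi_i(s)\bigr)\dd s.$

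To prove bullet 3, substitute $\gamma_j'(s) = \gamma_j'(t_0) + \rho_j(s)$, where $\int_{t_0}^{t}|\rho_j| \leq \eta(t-t_0)$ by the Lebesgue point property, and $\varphi_j(s) = (s-t_0)\gamma_j'(t_0) + r_j(s)$, where $|r_j(s)| \leq \eta(s-t_0)$ by bullet 1. Expanding $\gamma_i'(s)\varphi_{n+i}(s) - \gamma_{n+i}'(s)\varphi_i(s)$, the leading term $(s-t_0)\gamma_i'(t_0)\gamma_{n+i}'(t_0)$ appears with opposite signs in the two halves and cancels, leaving six cross terms each bounded in absolute value by an expression of the form $|\gamma_j'(t_0)|\eta(t-t_0)^{2}$ or $\eta^2(t-t_0)^{2}$. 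Summing over $i$ yields $|\varphi_{2n+1}(t)| \leq C\eta(|v|+\eta)(t-t_0)^{2}$ for an absolute constant $C$. Bullet 2 in the final coordinate then follows by rewriting this as $C\eta(t-t_0)\cdot(|v|+\eta)(t-t_0)$: since $\gamma'|_K$ is uniformly continuous on the compact set $K$, the quantity $|v|$ is uniformly bounded on $K$, so after shrinking $\delta(\eta)$ if necessary one ensures $(|v|+\eta)(t-t_0) \leq 1$ and recovers bullet 2 with the same constant $C$.

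The main obstacle is precisely the cancellation underlying bullet 3. A naive application of the triangle inequality with the crude bounds $|\varphi_{n+i}(s)| \leq (|v|+\eta)(s-t_0)$ and $\int_{t_0}^t|\gamma_i'| \leq (|v|+\eta)(t-t_0)$ yields only $|\varphi_{2n+1}(t)| = O((|v|+\eta)^2(t-t_0)^2)$, which is missing the crucial extra factor of $\eta$ demanded by the claim. Producing it requires explicitly splitting both position and velocity into first-order and remainder pieces and verifying that the antisymmetric structure of the integrand annihilates the leading-order contribution.
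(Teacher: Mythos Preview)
Your argument is correct and, for bullets 1 and 3, essentially identical to the paper's: the paper also writes $\varphi_{2n+1}$ via the lift formula and compares each factor to its first-order piece $(s-t_0)v_j$, obtaining exactly the cancellation you describe.

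The one genuine difference is bullet 2. The paper proves it \emph{before} bullet 3, by computing directly that $\gamma(t_0)^{-1}\bigl(\gamma(t_0)+(t-t_0)\gamma'(t_0)\bigr)=(t-t_0)(v,0)$ (this uses horizontality of $\gamma'(t_0)$ to kill the last coordinate) and then invoking that left translation is a diffeomorphism, hence Lipschitz on a compact set containing the image of $\gamma$; the constant $C$ comes from that Lipschitz bound. Your route instead handles the first $2n$ coordinates by the explicit Euclidean action and deduces the last coordinate \emph{a posteriori} from bullet 3. This is a legitimate and arguably more elementary alternative, but your final step---``shrink $\delta(\eta)$ so that $(|v|+\eta)(t-t_0)\leq 1$''---is slightly off, since $\delta(\eta)$ is the function already fixed in the setup and appears in the lemma's hypothesis. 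The clean fix is to avoid touching $\delta(\eta)$: since $\gamma'|_K$ is continuous on the compact set $K$, $|v|\leq M:=\sup_K|\gamma'|<\infty$, and since $t-t_0\leq 1$ and one may assume $\eta\leq 1$, bullet 3 gives $|\varphi_{2n+1}(t)|\leq C(M+1)\eta(t-t_0)$, which is bullet 2 with the new constant $C(M+1)$.
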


\begin{proof}
From the definition of $\delta(\eta)$ when we chose the set $K$, we deduce
\[|\gamma(t)-\gamma(t_{0}) - (t-t_{0})\gamma'(t_{0})| \leq \int_{t_{0}}^{t} |\gamma'-\gamma'(t_{0})| \leq \eta (t-t_{0})\]
for $t_{0}<t<t_{0}+\delta(\eta)$. 

The group translation is Euclidean in the first $2n$ coordinates and its differential maps horizontal vectors to horizontal vectors. Since horizontal vectors at $0$ are of the form $(v,0)$ for $v\in \mathbb{R}^{2n}$, setting $v=(\gamma_{1}'(t_{0}),\ldots, \gamma_{2n}'(t_{0}))$ gives $\varphi'(t_{0})=(v,0)$. We next claim:
\begin{equation}\label{tocheck}
\gamma(t_{0})^{-1}(\gamma(t_{0})+(t-t_{0})\gamma'(t_{0}))=(t-t_{0})(v,0).
\end{equation}
It is clear that the first $2n$ coordinates on each side of \eqref{tocheck} coincide, since the group operation is Euclidean in the first $2n$ coordinates. An elementary calculation using the formula for the group translation shows that the final coordinate of the left hand side of \eqref{tocheck} is given by:
\[(t-t_{0})(\gamma_{2n+1}'(t_{0})-2\sum_{i=1}^{n}(\gamma_{n+i}(t_{0})\gamma_{i}'(t_{0})-\gamma_{i}(t_{0})\gamma_{n+i}'(t_{0}))).\]
The assumption $t_{0}\in K$ implies, by definition, that $\gamma'(t_{0})$ is a horizontal vector at $\gamma(t_{0})$. Hence, as in Lemma \ref{lift},
\[\gamma_{2n+1}'(t_{0})=2\sum_{i=1}^{n}(\gamma_{n+i}(t_{0})\gamma_{i}'(t_{0})-\gamma_{i}(t_{0})\gamma_{n+i}'(t_{0})).\]
This implies that the final coordinate of the left hand side of \eqref{tocheck} is equal to zero, completing the proof of \eqref{tocheck}

Since left translations are diffeomorphisms, they change Euclidean distances on compact sets by at most a constant factor. This implies
\[ |\varphi(t)-(t-t_{0})(v,0)| \leq C |\gamma(t)-\gamma(t_{0}) - (t-t_{0})\gamma'(t_{0})| \leq C \eta (t-t_{0})\]
for all $t_{0}<t<t_{0}+\delta(\eta)$, where $C$ is a constant depending only on a compact set containing the image of $\gamma$.

Fix $t_{0}<t<t_{0}+\delta(\eta)$. By using Lemma \ref{lift} and observing $\varphi(t_{0})=0$, we know that
\[\varphi_{2n+1}(t)=2\sum_{i=1}^{n}\int_{t_{0}}^{t} (\varphi_{i}'\varphi_{n+i}-\varphi_{n+i}'\varphi_{i}).\]
To bound $|\varphi_{2n+1}(t)|$ we estimate each integral in the sum individually, using exactly the same argument. Fix $1\leq i\leq n$; we estimate
\begin{align*}
& \int_{t_{0}}^{t} |\varphi_{i}'(s)\varphi_{n+i}(s)-(s-t_{0})v_{i}v_{n+i}| \dd s\\
&\qquad \leq \int_{t_{0}}^{t} |\varphi_{i}'(s)| |\varphi_{n+i}(s)-(s-t_{0})v_{n+i}| + (s-t_{0})|v_{n+i}||\varphi_{i}'(s)-v_{i}| \dd s.
\end{align*}
For the first term we use the inequality $|\varphi_{n+i}(s)-(s-t_{0})v_{n+i}|\leq C\eta(s-t_{0})$, which was proved above, to obtain
\begin{align*}
\int_{t_{0}}^{t} |\varphi_{i}'(s)| |\varphi_{n+i}(s)-(s-t_{0})v_{n+i}|\dd s &\leq C\eta \int_{t_{0}}^{t} |\varphi_{i}'(s)| (s-t_{0})\dd s\\
&\leq C\eta(t-t_{0})\int_{t_{0}}^{t} (|\varphi_{i}'(s)-v_{i}|+|v_{i}|)\dd s\\
&\leq C\eta (\eta+|v_{i}|)(t-t_{0})^{2}.
\end{align*}
For the second term we have:
\begin{align*}
\int_{t_{0}}^{t} (s-t_{0})|v_{n+i}||\varphi_{i}'(s)-v_{i}| \dd s &\leq |v_{n+i}|(t-t_{0})\int_{t_{0}}^{t} |\varphi_{i}'(s)-v_{i}| \dd s \\
&\leq \eta |v_{n+i}|(t-t_{0})^{2}.
\end{align*}
Hence we obtain
\[\int_{t_{0}}^{t} |\varphi_{i}'(s)\varphi_{n+i}(s)-(s-t_{0})v_{i}v_{n+i}| \dd s\leq C\eta(|v_{i}|+|v_{n+i}|+\eta)(t-t_{0})^{2}.\]
In exactly the same way we find
\[ \int_{t_{0}}^{t} |\varphi_{n+i}'(s)\varphi_{i}(s)-(s-t_{0})v_{i}v_{n+i}| \dd s \leq C\eta(|v_{i}|+|v_{n+i}|+\eta)(t-t_{0})^{2}.\]
Using the triangle inequality, we obtain
\[ \int_{t_{0}}^{t} (\varphi_{i}'\varphi_{n+i}-\varphi_{n+i}'\varphi_{i}) \leq C\eta(|v_{i}|+|v_{n+i}|+\eta)(t-t_{0})^{2}.\]
By summing these estimates over $1\leq i\leq n$, we obtain
\[|\varphi_{2n+1}(t)|\leq C\eta(|v|+\eta)(t-t_{0})^{2}.\]
as required.
\end{proof}

Since $m, M\in K$, the connected components of $[m,M]\setminus K$ are open intervals, which we denote by $(a_{i},b_{i})$ for $i\geq 1$. Thus $[m,M]\setminus K=\bigcup_{i\geq 1}(a_{i},b_{i})$, with $a_{i}, b_{i}\in K$. Since $(b_{i}-a_{i})\to 0$, we may use the definition of $K$ and Lemma \ref{studyofcurve} to find $\varepsilon_{i} \downarrow 0$ such that for every $i\geq 1$:
\begin{align*}
|\gamma'(b_{i})-\gamma'(a_{i})|&\leq \varepsilon_{i},\\
|\gamma(b_{i})-\gamma(a_{i})-(b_{i}-a_{i})\gamma'(a_{i})|&\leq \varepsilon_{i}(b_{i}-a_{i}),\\
|\gamma(a_{i})^{-1}\gamma(b_{i})-(b_{i}-a_{i})(\gamma_{1}'(a_{i}), \ldots, \gamma_{2n}'(a_{i}),0)| &\leq \varepsilon_{i} (b_{i}-a_{i}),
\end{align*}
and
\[|(\gamma(a_{i})^{-1}\gamma(b_{i}))_{2n+1}| \leq \varepsilon_{i} (\varepsilon_{i}+|\gamma_{1}'(a_{i})| + \ldots + |\gamma_{2n}'(a_{i})|)(b_{i}-a_{i})^{2}.\]
By replacing $\varepsilon_{i}$ by $\max \{\varepsilon_{i}, (b_{i}-a_{i})\}$ we may assume $(b_{i}-a_{i})\leq \varepsilon_{i}$ for $i\geq 1$.

We will construct $C^{1}$ interpolations in the gaps $(a_{i},b_{i})\subset [m,M]\setminus K$. Inspired by Lemma \ref{lift} and Lemma \ref{area} we first state a preliminary lemma for constructing curves in $\mathbb{R}^2$. It asserts the existence of a curve in $\mathbb{R}^{2}$ with controlled derivative which satisfies boundary conditions and sweeps out a given signed area.

\begin{lemma}\label{plane}
Let $a<b$ and $\eta>0$. Suppose $p,v,w\in \mathbb{R}^2$ and $A\in \mathbb{R}$ satisfy the following estimates:
\begin{itemize}
\item $|v-w|\leq \eta$.
\item $|p-(b-a)v|\leq \eta(b-a)$.
\item $|A|\leq \eta (\eta + |v|)(b-a)^2$.
\end{itemize}
Then there is a $C^{1}$ curve $\sigma \colon [a,b] \to \mathbb{R}^2$ such that:
\begin{itemize}
\item $\sigma(a)=0$ and $\sigma(b)=p$.
\item $\sigma'(a)=v$ and $\sigma'(b)=w$.
\item The signed area bounded by the line segment $[0,p]$ and $\sigma$ is $A$.
\item $|\sigma'(t)-v|\leq C\eta$ for all $t\in [a,b]$, for some constant $C$.
\end{itemize}
\end{lemma}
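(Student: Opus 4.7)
The plan is to build $\sigma$ in two stages: first a base $C^1$ curve $h$ matching the prescribed endpoint positions and velocities, and then a perturbation $\tau$ with vanishing boundary data to correct the swept area exactly.

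For the base curve I will take the cubic Hermite interpolant
\[
h(t) = (3s^2-2s^3)\,p + (s^3-2s^2+s)(b-a)\,v + (s^3-s^2)(b-a)\,w, \qquad s = \tfrac{t-a}{b-a},
\]
which satisfies $h(a)=0$, $h(b)=p$, $h'(a)=v$, $h'(b)=w$ automatically. Differentiating and rearranging via the polynomial identities among the Hermite basis functions yields
\[
h'(t)-v = \tfrac{6s(1-s)}{b-a}(p-(b-a)v) + (3s^2-2s)(w-v),
\]
so the hypotheses force $|h'(t)-v|\leq \tfrac{5}{2}\eta$, and a parallel computation gives $|h(t)-(t-a)v|\leq C\eta(b-a)$. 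Substituting $h(t) = (t-a)v + O(\eta(b-a))$ and $h'(t) = v + O(\eta)$ into the formula from Lemma \ref{area} and expanding then yields $|A_h|\leq C\eta(|v|+\eta)(b-a)^2$, hence $|A-A_h|\leq C\eta(|v|+\eta)(b-a)^2$.

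For the area correction I want $\tau$ with $\tau(a)=\tau(b)=0$, $\tau'(a)=\tau'(b)=0$, $|\tau'|\leq C\eta$, and $A_{h+\tau}=A$, and I will split into cases depending on the size of $|v|$. In the regime $|v|\geq C_0\eta$, with $C_0$ chosen large enough that the estimate on $h'-v$ forces $h_1'(t)\geq |v|/2$ once $v$ is rotated to lie along the positive horizontal axis, I will take $\tau(t)=(0,c\,\phi_0(\tfrac{t-a}{b-a}))$ with $\phi_0$ a fixed nonnegative bump on $[0,1]$ vanishing to second order at the endpoints and with $\int_0^1\phi_0 = 1$. Integration by parts gives $A_{h+\tau}-A_h = -c\int_a^b h_1'(t)\phi_0(\tfrac{t-a}{b-a})\,dt$; since the integral has magnitude at least $|v|(b-a)/2$, solving for $c$ produces $|c|\leq C\eta(b-a)$ and hence $|\tau'|\leq C\eta$. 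In the complementary regime $|v|<C_0\eta$ all relevant quantities are $O(\eta)$ and $|A-A_h|\leq C\eta^2(b-a)^2$. I will fix once and for all a smooth loop $\psi\colon[0,1]\to\mathbb{R}^2$ with $\psi(0)=\psi(1)=0$, $\psi'(0)=\psi'(1)=0$, and nonzero signed area $A_\psi>0$, and observe that the reflection $\psi^-(s)=(\psi_1(s),-\psi_2(s))$ has signed area $-A_\psi$. Choosing $\psi_*\in\{\psi,\psi^-\}$ so that $A_{\psi_*}$ has the same sign as $A-A_h$ and setting $\tau(t)=\mu\,\psi_*(\tfrac{t-a}{b-a})$, the area condition becomes a quadratic
\[
\mu^2 A_{\psi_*} + \tfrac{1}{2}L_0\mu = A-A_h,
\]
with $|L_0|\leq C(|v|+\eta)(b-a)\leq C\eta(b-a)$. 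The sign choice makes the discriminant nonnegative, so a real root exists, and the quadratic formula together with $|A-A_h|\leq C\eta^2(b-a)^2$ forces $|\mu|\leq C\eta(b-a)$, hence $|\tau'|\leq C\eta$. Undoing the rotation (which preserves Euclidean norms and signed areas) will then give the full statement.

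The main obstacle will be the area correction in the small-$|v|$ regime. When $v$ is not bounded away from zero the simple linear correction $\phi\mapsto -\int h_1'\phi$ used in the large-$|v|$ case may fail to surject onto a range containing $A-A_h$, because $h_1'$ can change sign. Replacing the linear correction by a loop-shaped perturbation whose orientation is matched to the sign of $A-A_h$ converts the area-matching equation into a solvable quadratic whose root is of the correct order $\eta(b-a)$.
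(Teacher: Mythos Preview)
Your argument is correct. Both your proof and the paper's share the same skeleton: a cubic Hermite interpolant to meet the four boundary conditions with $|h'-v|\le C\eta$, a case split on whether $|v|$ is large or small compared to $\eta$, and then a separate mechanism to force the swept area to equal $A$. The difference lies entirely in how the area is corrected.

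The paper works by \emph{concatenation}: it splits $[a,b]$ into two or three subintervals, uses the cubic only on the outer pieces, and on a middle piece inserts an explicit curve (a circle traversed at variable speed when $|v|$ is small, a $\sin^2$ graph over the segment $[p/3,2p/3]$ when $|v|$ is large) whose enclosed area is computed in closed form and tuned by a single scalar parameter. You instead keep the single cubic $h$ on all of $[a,b]$ and write $\sigma=h+\tau$ with a \emph{perturbation} $\tau$ that has vanishing first-order boundary data; the area equation then becomes a linear equation in the bump amplitude $c$ (large-$|v|$ case, where the rotation making $h_1'\ge|v|/2$ guarantees the coefficient is bounded below) or a quadratic in the loop scale $\mu$ (small-$|v|$ case, where matching the sign of $A_{\psi_*}$ to that of $A-A_h$ makes the discriminant nonnegative and the estimates $|L_0|\le C\eta(b-a)$, $|A-A_h|\le C\eta^2(b-a)^2$ force $|\mu|\le C\eta(b-a)$).

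Your approach is tidier in that it avoids matching positions and velocities across concatenation points, and the area equation is solved abstractly rather than by explicit integration. The paper's approach is more geometric and self-contained, with every curve written down explicitly; in particular it never needs to invoke a rotation or solve a quadratic. Both arrive at the same derivative bound $|\sigma'-v|\le C\eta$ with an absolute constant $C$.
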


\begin{proof}
Fix arbitrary $T>0$ and $P, V, W\in \mathbb{R}^{2}$. Consider first the auxiliary curve $f\colon [0,T]\to \mathbb{R}^{2}$ given by:
\begin{equation}\label{aux}
f(t)=tV+\frac{t^2}{T^2}(3P-2TV-TW)+\frac{t^3}{T^3}(-2P+TV+TW).
\end{equation}
Direct computation shows $f(0)=0$, $f(T)=P$, $f'(0)=V$ and $f'(T)=W$. If $|V-W|\leq \delta$ and $|P-TV|\leq \delta T$ for some $\delta>0$, it is easy to check that $|f'(t)-V|\leq 17\delta$ for $0\leq t\leq T$.

To construct the curve in the statement of the lemma, we consider separately the cases $|v|\leq 2\eta$ and $|v|>2\eta$. 

\medskip

\noindent \emph{Suppose $|v|\leq 2\eta$. }

In this case $|w|\leq 3\eta$ and $|p|\leq 3\eta (b-a)$. We claim there exists a $C^1$ curve $\alpha \colon [a,(a+b)/2]\to \mathbb{R}^2$ satisfying:
\begin{itemize}
\item $\alpha(a)=0$ and $\alpha((a+b)/2)=p$.
\item $\alpha'(a)=v$ and $\alpha'((a+b)/2)=w$.
\item $|\alpha'| \leq C\eta$ for some constant $C$. 
\end{itemize}
Indeed, let $\widetilde{\alpha}\colon [0,(b-a)/2]\to \mathbb{R}^{2}$ be the curve in \eqref{aux} with $T=(b-a)/2$, $P=p$, $V=v$, $W=w$ and $\delta=8\eta$. Then $|V-W|\leq \eta$ and
\[|P-TV|\leq |P|+T|V|\leq 4\eta (b-a).\]
Hence $|\widetilde{\alpha}'(t)-V|\leq 136\eta$ for $0\leq t\leq (b-a)/2$. The desired curve $\alpha$ is then given by $\alpha(t)=\widetilde{\alpha}(t-a)$.

Let $A_{\alpha}$ denote the signed area of the region bounded by $[0,p]$ and $\alpha$. Since $|\alpha'|\leq C\eta$, we know $|\alpha| \leq C\eta(b-a)$ and hence:
\[|A_{\alpha}| = \frac{1}{2} \left| \int_{a}^{(a+b)/2} (\alpha_{1}\alpha_{2}'-\alpha_{2}\alpha_{1}') \right| \leq C^2 \eta^2(b-a)^2.\]
The assumption $|v|\leq 2\eta$ gives $|A|\leq 3\eta^{2}(b-a)^{2}$ and therefore:
\[|A-A_{\alpha}| \leq |A|+|A_{\alpha}| \leq (3+C^2)\eta^2 (b-a)^2.\]

We claim that there exists a $C^1$ curve $\beta\colon [(a+b)/2,b]\to \mathbb{R}^2$ satisfying:
\begin{itemize}
\item $\beta((a+b)/2)=\beta(b)=p$.
\item $\beta'((a+b)/2)=\beta'(b)=w$.
\item The signed area enclosed by $\beta$ is $A-A_{\alpha}$.
\item $|\beta'|\leq C\eta$ for some new, potentially larger, constant $C$.
\end{itemize}
Write $T=(b-a)/2$. By using translation, rotation and reparameterization, it suffices to give a $C^1$ curve $g\colon [0,T]\to \mathbb{R}^{2}$ such that:
\begin{itemize}
\item $g(0)=g(T)=0$.
\item $g'(0)=g'(T)=(0,|w|)$.
\item The signed area enclosed by $g$ is $A-A_{\alpha}$.
\item $|g'|\leq C\eta$.
\end{itemize}
To achieve this we will follow a circle at varying speed. Let $r=\sqrt{|A-A_{\alpha}|/\pi}$. Since $|w|\leq 3\eta$, we may fix a $C^1$ map $\theta\colon [0,T]\to 2\pi$ such that:
\begin{itemize}
\item $\theta(0)=0$ and $\theta(T)=2\pi$.
\item $\theta'(0)=\theta'(T)=|w|/r$.
\item $|\theta'|\leq C\max \{\eta/r, 2\pi / T\}$.
\end{itemize}
Define the curve $g\colon [0,T]\to \mathbb{R}^{2}$ by
\[g(t)=(-r+r\cos \theta(t),r\sin \theta(t)).\]
Clearly $g(0)=g(T)=0$ and
\[g'(t)=\theta'(t)(-r\sin \theta(t), r\cos \theta(t)).\]
Hence $g'(0)=g'(T)=(0,|w|)$ and
\[|g'|\leq r|\theta'|\leq C\max \{\eta, 2\pi r / T\}.\]
Our earlier estimate of $|A-A_{\alpha}|$ implies $|A-A_{\alpha}|\leq C\eta^2 T^2$. Using the definition of $r$ gives $r/T\leq C\eta$ and hence $|g'|\leq C\eta$ as desired. Since $\theta(0)=0$ and $\theta(T)=2\pi$, the signed area enclosed by $g$ is $\pi r^{2}=|A-A_{\alpha}|$. This completes the justification of the existence of $g$ if the signed area is positive. To instead obtain a curve with negative signed area, use a circle with centre $(r,0)$ parameterized in a clockwise direction. Our construction of the curve $g$ implies the existence of a satisfactory curve $\beta$.

To finish the proof of the case $|v|\leq 2\eta$, we define $\sigma$ as the concatentation of the curves $\alpha$ and $\beta$. Clearly $\sigma$ has the properties given in the statement of the lemma.

\medskip

\noindent \emph{Suppose $|v|>2\eta$.}

We first claim there is a $C^1$ curve $\alpha\colon [a,(2a+b)/3]\to \mathbb{R}^2$ satisfying:
\begin{itemize}
\item $\alpha(a)=0$ and $\alpha((2a+b)/3)=p/3$.
\item $\alpha'(a)=v$ and $\alpha'((2a+b)/3)=p/(b-a)$.
\item $|\alpha'-v|\leq C\eta$.
\end{itemize}
To see this, let $\widetilde{\alpha}\colon [0,(b-a)/3]\to \mathbb{R}^{2}$ be the curve given by \eqref{aux} with $T=(b-a)/3$, $P=p/3$, $V=v$, $W=p/(b-a)$ and $\delta=\eta$. Clearly then
\[|V-W|= |v-p/(b-a)| \leq \eta\]
and
\[|P-TV|=|p-(b-a)v|/3\leq \eta(b-a)/3.\]
Hence $|\widetilde{\alpha}'(t)-v|\leq C\eta$ for $0\leq t\leq (b-a)/3$. The desired curve $\alpha$ is given by $\alpha(t)=\widetilde{\alpha}(t-a)$.

Similarly, since $|w-v|\leq \eta$, we can fix a $C^1$ curve $\zeta\colon [(a+2b)/3,b]\to \mathbb{R}^2$ satisfying:
\begin{itemize}
\item $\zeta((a+2b)/3)=2p/3$ and $\zeta(b)=p$.
\item $\zeta'((a+2b)/3)=p/(b-a)$ and $\zeta'(b)=w$.
\item $|\zeta'-v|\leq C\eta$.
\end{itemize}

The assumption $|v|>2\eta$, together with $|p-(b-a)v|\leq \eta(b-a)$, implies $|p|\geq \eta(b-a)$ and $(b-a)|v|\leq 2|p|$. These then imply
\[|A|\leq \eta (\eta + |v|)(b-a)^2 \leq 3\eta(b-a)|p|.\]
Let $A_{\alpha}$ be the signed area enclosed by $[0,p/3]$ and $\alpha$. Similarly let $A_{\zeta}$ be the signed area enclosed by $[2p/3,p]$ and $\zeta$. We claim $|A_{\alpha}|, |A_{\zeta}| \leq C\eta(b-a)|p|$. This is most intuitive from the geometry, but we now check formally. For the first we use Lemma \ref{area} to estimate as follows:
\begin{align*} 
|A_{\alpha}| &= \frac{1}{2} \left| \int_{a}^{(2a+b)/3} (\alpha_{1}\alpha_{2}' - \alpha_{2}\alpha_{1}') \right|\\
&\leq \frac{1}{2}\int_{a}^{(2a+b)/3} (|\alpha_{1}(s)\alpha_{2}'(s) - sv_{1}v_{2}| + |\alpha_{2}(s)\alpha_{1}'(s)-sv_{1}v_{2}|) \dd s.
\end{align*}
For the first term have
\[|\alpha_{1}(s)\alpha_{2}'(s) - sv_{1}v_{2}|\leq |\alpha_{1}(s)||\alpha_{2}'(s)-v_{2}| + |v_{2}||\alpha_{1}(s)-sv_{1}|.\]
The inequality $|\alpha'-v|\leq C\eta$ implies $|\alpha(s)-sv|\leq C\eta s$ and hence, since we assumed $\eta<|v|/2$, $|\alpha(s)|\leq C|v|s$. Thus
\begin{align*}
|\alpha_{1}(s)\alpha_{2}'(s) - sv_{1}v_{2}| &\leq C|v|s\cdot C\eta + |v| \cdot C\eta s\\
&\leq C|v|\eta s.
\end{align*}
Hence, using also the earlier inequality $(b-a)|v|\leq 2|p|$,
\begin{align*}
\int_{a}^{(2a+b)/3} |\alpha_{1}(s)\alpha_{2}'(s) - sv_{1}v_{2}| \dd s &\leq \int_{a}^{(2a+b)/3} C|v|\eta s \dd s\\
&\leq C|v|\eta (b-a)^2\\
&\leq C\eta(b-a)|p|.
\end{align*}
Almost the same argument gives
\[\int_{a}^{(2a+b)/3} |\alpha_{2}(s)\alpha_{1}'(s) - sv_{1}v_{2}| \dd s \leq C\eta(b-a)|p|\]
and hence $|A_{\alpha}|\leq C\eta(b-a)|p|$. A similar argument gives $|A_{\zeta}|\leq C\eta(b-a)|p|$, as claimed.

Putting together our estimates, we deduce:
\[|A-A_{\alpha}-A_{\zeta}|\leq |A|+|A_{\alpha}|+|A_{\zeta}| \leq C\eta(b-a)|p|.\]
We now claim that there exists a $C^1$ curve $\beta\colon [(2a+b)/3, (a+2b)/3]\to \mathbb{R}^2$ satisfying:
\begin{itemize}
\item $\beta((2a+b)/3)=p/3$ and $\beta((a+2b)/3)=2p/3$.
\item $\beta'((2a+b)/3)=\beta'((a+2b)/3)=p/(b-a)$.
\item The signed area enclosed by $[p/3, 2p/3]$ and $\beta$ is $A-A_{\alpha}-A_{\zeta}$.
\item $|\beta'-p/(b-a)|\leq C\eta$.
\end{itemize}
Using translation, rotation and reparameterization, it suffices to define a $C^1$ curve $h\colon [0,(b-a)/3]\to \mathbb{R}^{2}$ such that:
\begin{itemize}
\item $h(0)=0$ and $h((b-a)/3)=(|p|/3,0)$.
\item $h'(0)=h'((b-a)/3)=(|p|/(b-a),0)$.
\item The signed area enclosed by $[0,(|p|/3,0)]$ and $h$ is $A-A_{\alpha}-A_{\zeta}$.
\item $|h'- (|p|/(b-a),0) |\leq C\eta$.
\end{itemize}
Denote $T=(b-a)/3$ and fix $\lambda \in \mathbb{R}$. Define a $C^1$ curve $h\colon [0,T]\to \mathbb{R}^2$ by
\[h(t)=((|p|/3T)t, \lambda \sin^{2} (\pi t/T))\]
so that
\[h'(t)=( |p|/3T, (\pi \lambda / T)\sin(2\pi t/T)).\]
It is easy to verify that
\[h(0)=0,\qquad h((b-a)/3)=(|p|/3,0)\]
and
\[h'(0)=h'((b-a)/3)=(|p|/(b-a),0).\]
It is also clear that
\[|h'- (|p|/(b-a),0)| = |h'-(|p|/3T,0)|\leq \pi|\lambda|/T.\]
A more lengthy, but still elementary, calculation shows
\[\int_{0}^{T} (h_{1}h_{2}'-h_{2}h_{1}')=-\lambda |p|/3.\]
Hence the signed area of the region enclosed by $[0,(|p|/3,0)]$ and $h$ is $-\lambda |p|/6$. Choose
\[\lambda = -6(A-A_{\alpha}-A_{\zeta})/|p|\]
so that $-\lambda |p|/6 = A-A_{\alpha}-A_{\zeta}$. Our earlier estimate of $A-A_{\alpha}-A_{\zeta}$ implies $|\lambda|\leq C\eta T$. Hence, with our choice of $\lambda$, we have $|h'-(|p|/3T,0)|\leq C\eta$. Hence $h$ has all the properties required. The existence of the desired curve $\beta$ follows.

We define $\sigma$ as the concatention of the curves $\alpha$, $\beta$ and $\zeta$. Clearly $\sigma$ has the properties given in the statement of the lemma. This concludes the case $|v|>2\eta$ and hence proves the lemma.
\end{proof}

We now use the previous lemma, together with our understanding of horizontal curves, to interpolate smoothly in the gaps $(a_{i},b_{i})$ of $[m,M]\setminus K$.

\begin{lemma}\label{interpolate}
For $i\geq 1$, there is a $C^{1}$ horizontal curve $\psi^{i}\colon [a_{i},b_{i}]\to \mathbb{R}^{2n+1}$ satisfying:
\begin{enumerate}
\item $\psi^{i}(a_{i})=\gamma(a_{i})$ and $\psi^{i}(b_{i})=\gamma(b_{i})$.
\item $(\psi^{i})'(a_{i})=\gamma'(a_{i})$ and $(\psi^{i})'(b_{i})=\gamma'(b_{i})$.
\item $|(\psi^{i})'(t)-\gamma'(a_{i})|\leq C\varepsilon_{i}$ for all $t\in (a_{i},b_{i})$.
\end{enumerate}
\end{lemma}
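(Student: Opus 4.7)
The plan is to reduce the construction to $n$ independent applications of Lemma \ref{plane}, one in each coordinate plane $(x_j,y_j)$ of $\mathbb{R}^{2n}$, and then recover the $(2n+1)$-th coordinate via the horizontal lift of Lemma \ref{lift}.

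First I would use the left translation $L_{\gamma(a_i)^{-1}}$, which preserves horizontality and $C^1$-regularity, to reduce to producing a $C^1$ horizontal curve $\widetilde\psi^i\colon[a_i,b_i]\to\mathbb{R}^{2n+1}$ starting at $0$, ending at $\gamma(a_i)^{-1}\gamma(b_i)$, with initial velocity $(v,0)$, where $v=(\gamma'_1(a_i),\ldots,\gamma'_{2n}(a_i))$, and terminal velocity the horizontal vector at $\gamma(a_i)^{-1}\gamma(b_i)$ whose first $2n$ components are $(\gamma'_1(b_i),\ldots,\gamma'_{2n}(b_i))$. Setting $\psi^i=\gamma(a_i)\cdot\widetilde\psi^i$ then yields the desired curve: positions match by construction, and horizontality forces the $(2n+1)$-th components of the boundary velocities to coincide with those of $\gamma'(a_i)$ and $\gamma'(b_i)$ (using the computation from Lemma \ref{studyofcurve} that turns horizontality into the vanishing of the $(2n+1)$-th component after translation).

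By Lemma \ref{lift}, a horizontal curve starting at $0$ is determined by its first $2n$ coordinates, and the calculation of Lemma \ref{area} identifies the terminal value of its last coordinate with $-4\sum_{j=1}^n A_j$, where $A_j$ is the signed area between the projection $\sigma^{(j)}=(\widetilde\psi^i_j,\widetilde\psi^i_{n+j})$ and the chord to its endpoint. For each $j$, writing $v^{(j)}=(v_j,v_{n+j})$ and $w^{(j)}=(\gamma'_j(b_i),\gamma'_{n+j}(b_i))$ and letting $p^{(j)}$ be the $(j,n+j)$ coordinate pair of $\gamma(a_i)^{-1}\gamma(b_i)$, I would apply Lemma \ref{plane} with $\eta=C\varepsilon_i$ to produce $\sigma^{(j)}$. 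The first three hypotheses of Lemma \ref{plane} for the pair $(v^{(j)},w^{(j)},p^{(j)})$ are inherited directly from the four estimates displayed just before Lemma \ref{plane}.

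The main obstacle is to distribute the forced total $\sum_j A_j=-(\gamma(a_i)^{-1}\gamma(b_i))_{2n+1}/4$ so that the area hypothesis $|A_j|\le\eta(\eta+|v^{(j)}|)(b_i-a_i)^2$ holds in each plane, since some $|v^{(j)}|$ may be much smaller than $|v|$. I would resolve this by weighting the split by $|v^{(j)}|+\varepsilon_i$, setting
\[A_j=-\frac{(\gamma(a_i)^{-1}\gamma(b_i))_{2n+1}}{4}\cdot\frac{|v^{(j)}|+\varepsilon_i}{\sum_k(|v^{(k)}|+\varepsilon_i)}.\]
Since $\sum_k|v^{(k)}|\ge|v|$, the fourth estimate preceding Lemma \ref{plane} then yields $|A_j|\le C\varepsilon_i(\varepsilon_i+|v^{(j)}|)(b_i-a_i)^2$, as required. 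Assembling the $\sigma^{(j)}$ as the first $2n$ components of $\widetilde\psi^i$ and defining the $(2n+1)$-th component via Lemma \ref{lift} produces the desired curve. Property (3) then follows from the derivative bound of Lemma \ref{plane} in each plane, together with the lift formula of Lemma \ref{lift} and the smallness of $\widetilde\psi^i$ in the first $2n$ coordinates to control the $(2n+1)$-th derivative; properties (1) and (2) are immediate from the boundary data and the horizontality identity for $\gamma$ at $a_i$ and $b_i$.
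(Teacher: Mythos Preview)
Your proposal is correct and follows the same overall architecture as the paper's proof: translate so that $\gamma(a_i)=0$, build the first $2n$ coordinates plane by plane via Lemma~\ref{plane}, recover the last coordinate with Lemma~\ref{lift}, and check the endpoint and derivative conditions using horizontality.

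The one genuine difference is in how the area constraint is handled. You distribute the required total signed area across all $n$ planes with weights $(|v^{(j)}|+\varepsilon_i)/\sum_k(|v^{(k)}|+\varepsilon_i)$, so that each $A_j$ automatically satisfies the hypothesis $|A_j|\le C\varepsilon_i(\varepsilon_i+|v^{(j)}|)(b_i-a_i)^2$. The paper instead concentrates the entire area in a single plane, namely the index $J$ maximizing $|\gamma_j'(a_i)|+|\gamma_{n+j}'(a_i)|$, and applies Lemma~\ref{plane} there with $\eta=2n\varepsilon_i$; in the remaining planes it takes $A_j=0$. Both strategies work for the same reason: the total area is bounded by $\varepsilon_i(\varepsilon_i+\sum_j|v_j|)(b_i-a_i)^2$, and either the maximal pair $|v^{(J)}|$ or the weighted sum $\sum_k|v^{(k)}|$ controls $\sum_j|v_j|$ up to a dimensional constant. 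Your weighted split is a bit more symmetric; the paper's choice is slightly more economical in that only one application of Lemma~\ref{plane} carries a nonzero area. One small remark: the inequality you actually need is $\sum_k|v^{(k)}|\ge c\sum_{j=1}^{2n}|v_j|$ (with $c=1/\sqrt{2}$, say), since the fourth displayed estimate before Lemma~\ref{plane} is stated with the $\ell^1$ norm of $v$ rather than $|v|$; this is of course immediate.
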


\begin{proof}
Fix $i\geq 1$ as in the statement of the lemma. Since group translations are diffeomorphisms and preserve horizontal curves, we may assume that $\gamma(a_{i})=0$ for convenience. 

Choose $1\leq J\leq n$ such that the sums
\[|\gamma_{j}'(a_{i})|+|\gamma_{n+j}'(a_{i})|, \qquad 1\leq j\leq n,\]
are maximised by setting $j=J$. We plan to first apply Lemma \ref{plane} with the following parameters:
\begin{itemize}
\item $a=a_{i}, b=b_{i}$, $\eta=2n\varepsilon_{i}$.
\item $p=(\gamma_{J}(b_{i}), \gamma_{n+J}(b_{i}))$.
\item $v=(\gamma_{J}'(a_{i}),\gamma_{n+J}'(a_{i}))$, $w=(\gamma_{J}'(b_{i}),\gamma_{n+J}'(b_{i}))$.
\item $A=-(1/4)\gamma_{2n+1}(b_{i})$.
\end{itemize}
We show that these are admissible parameters for Lemma \ref{plane}. Indeed,
\[|v-w|\leq |\gamma'(b_{i})-\gamma'(a_{i})|\leq \varepsilon_{i} \leq \eta,\]
\[|p-(b-a)v|\leq |\gamma(b_{i})-\gamma(a_{i})-(b_{i}-a_{i})\gamma'(a_{i})| \leq \varepsilon_{i}\leq \eta,\]
and:
\begin{align*}
|A|&\leq |\gamma_{2n+1}(b_{i})|\\
&\leq \varepsilon_{i} (\varepsilon_{i}+|\gamma_{1}'(a_{i})| + \ldots + |\gamma_{2n}'(a_{i})|)(b_{i}-a_{i})^{2}\\
&\leq \varepsilon_{i}(\varepsilon_{i}+n (|\gamma_{J}'(a_{i})|+|\gamma_{n+J}'(a_{i})|))(b_{i}-a_{i})^{2}\\
&\leq \varepsilon_{i}(\varepsilon_{i}+2n |v|)(b_{i}-a_{i})^{2}\\
&\leq \eta(\eta+|v|)(b-a)^2.
\end{align*}
Denote the curve arising from Lemma \ref{plane} with this choice of parameters by $(\psi^{i}_{J}, \psi^{i}_{n+J})\colon [a_{i},b_{i}]\to \mathbb{R}^{2}$.

For each $1\leq j\leq n$ with $j\neq J$ we will again apply Lemma \ref{plane} with the modified parameters:
\begin{itemize}
\item $a=a_{i}, b=b_{i}$, $\eta=\varepsilon_{i}$.
\item $p=(\gamma_{j}(b_{i}), \gamma_{n+j}(b_{i}))$.
\item $v=(\gamma_{j}'(a_{i}),\gamma_{n+j}'(a_{i}))$, $w=(\gamma_{j}'(b_{i}),\gamma_{n+j}'(b_{i}))$.
\item $A=0$.
\end{itemize}
By simplifying the previous argument we see that such parameters are admissible. Denote the curves resulting from applying Lemma \ref{plane} with these parameters by $(\psi^{i}_{j}, \psi^{i}_{n+j})\colon [a_{i},b_{i}]\to \mathbb{R}^{2}$, for $1\leq j\leq n$ and $j\neq J$.

Define a new curve $\psi^{i}\colon [a_{i},b_{i}]\to \mathbb{R}^{2n+1}$ by $\psi^{i}=(\psi_{1}^{i},\ldots, \psi_{2n}^{i},\psi_{2n+1}^{i})$, where
\[\psi_{2n+1}^{i}(t)=2\sum_{j=1}^{n}\int_{a_{i}}^{t} ((\psi_{j}^{i})'\psi_{n+j}^{i}-(\psi_{n+j}^{i})'\psi_{j}^{i}).\]

Clearly $\psi^{i}$ is a $C^{1}$ curve and is horizontal by Lemma \ref{lift}. It follows directly from our application of Lemma \ref{plane} that for $1\leq j\leq 2n$:
\begin{itemize}
\item $\psi_{j}^{i}(a_{i})=\gamma_{j}(a_{i})=0$ and $\psi_{j}^{i}(b_{i})=\gamma_{j}(b_{i})$.
\item $(\psi_{j}^{i})'(a_{i})=\gamma_{j}'(a_{i})$ and $(\psi_{j}^{i})'(b_{i})=\gamma_{j}'(b_{i})$.
\end{itemize}
We check the positions and derivatives for the final coordinate $\psi_{2n+1}^{i}$. Clearly $\psi_{2n+1}^{i}(a_{i})=\gamma_{2n+1}(a_{i})=0$. The equality $(\psi^{i}_{2n+1})'(a_{i})=\gamma_{2n+1}'(a_{i})$ follows by combining the agreement of the initial coordinates:
\begin{align*}
(\psi_{2n+1}^{i})'(a_{i})&=2\sum_{j=1}^{n}((\psi_{j}^{i})'(a_{i})\psi_{n+j}^{i}(a_{i})-(\psi_{n+j}^{i})'(a_{i})\psi_{j}^{i}(a_{i}))\\
&= 2\sum_{j=1}^{n}(\gamma_{j}'(a_{i})\gamma_{n+j}(a_{i})-\gamma_{n+j}'(a_{i})\gamma_{j}(a_{i}))\\
&=\gamma_{2n+1}'(a_{i}).
\end{align*}
Exactly the same argument yields $(\psi_{2n+1}^{i})'(b_{i})=\gamma_{2n+1}'(b_{i})$. From the normalization $\gamma(a_{i})=0$, Lemma \ref{area}, and our application of Lemma \ref{plane}, we have:
\[ 2\int_{a_{i}}^{t} ((\psi_{J}^{i})'\psi_{n+J}^{i}-(\psi_{n+J}^{i})'\psi_{J}^{i})=\gamma_{2n+1}(b_{i})\]
and
\[ 2\int_{a_{i}}^{t} ((\psi_{j}^{i})'\psi_{n+j}^{i}-(\psi_{n+j}^{i})'\psi_{j}^{i})=0, \qquad j\neq J.\]
Summing these equalities gives $\psi_{2n+1}^{i}(b_{i})=\gamma_{2n+1}(b_{i})$.

From our application of Lemma \ref{plane} we know:
\begin{equation}\label{constantderivative}
|(\psi_{j}^{i})'(t)-\gamma_{j}'(a_{i})|\leq C\varepsilon_{i} \mbox{ for }t\in (a_{i},b_{i})\mbox{ and }j=1,\ldots,2n.
\end{equation}
We claim that a similar inequality holds for $j=2n+1$. Since $\gamma'|_{K}$ is continuous, hence bounded, \eqref{constantderivative} implies $(\psi_{j}^{i})'$ is bounded by a constant independent of $i$ and $j$. Using $\psi_{j}^{i}(a_{i})=0$ and $(b_{i}-a_{i})\leq \varepsilon_{i}$, we obtain $|\psi_{j}^{i}(t)|\leq C(b_{i}-a_{i})\leq C\varepsilon_{i}$ for $t\in (a_{i},b_{i})$ and $j=1, \ldots, 2n$. Hence, again using \eqref{constantderivative}, we obtain for every $j=1,\ldots,n,$ and $t\in (a_{i},b_{i})$:
\begin{align*}
|(\psi_{j}^{i})'(t)\psi_{n+j}^{i}(t)-(\psi_{n+j}^{i})'(t)\psi_{j}^{i}(t)| &\leq |(\psi_{j}^{i})'(t)| |\psi_{n+j}^{i}(t)|+|(\psi_{n+j}^{i})'(t) | |\psi_{j}^{i}(t)|\\
&\leq C\varepsilon_{i}.
\end{align*}
Consequently
\[|(\psi_{2n+1}^{i})'(t)| \leq 2\sum_{j=1}^{n}| (\psi_{j}^{i})'(t)\psi_{n+j}^{i}(t)-(\psi_{n+j}^{i})'(t)\psi_{j}^{i}(t)|\leq C\varepsilon_{i}\]
for every $t\in (a_{i},b_{i})$. Since $\gamma(a_{i})=0$ implies $\gamma_{2n+1}'(a_{i})=0$, this completes the proof.
\end{proof}

Using Lemma \ref{interpolate} to interpolate in the subintervals of $[m,M]\setminus K$, we define a curve $\Gamma\colon [m,M]\to \mathbb{R}^{2n+1}$ by:
\[ \Gamma(t) = \begin{cases} \gamma(t) & \mbox{if }t\in K,\\
\psi^{i}(t) & \mbox{if }t\in (a_{i},b_{i}) \mbox{ and } i\geq 1. \end{cases}\]

We now show that $\Gamma$ is the desired $C^{1}$ horizontal curve.

\begin{proposition}\label{smoothcurve}
The function $\Gamma\colon [m,M]\to \mathbb{R}^{2n+1}$ defines a $C^{1}$ horizontal curve in the Heisenberg group $\mathbb{H}^{n}$ with derivative given by:
\[ \Gamma'(t) = \begin{cases} \gamma'(t) & \mbox{if }t\in K,\\
(\psi^{i})'(t) & \mbox{if }t\in (a_{i},b_{i}) \mbox{ and } i\geq 1. \end{cases}\]
\end{proposition}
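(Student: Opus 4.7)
The plan is to verify four things about $\Gamma$: it is well-defined and continuous, it is horizontal, it is everywhere differentiable with the claimed derivative formula, and that derivative is continuous. Well-definedness is immediate since at each point $a_i$ or $b_i$ the two cases agree by the matching conditions $\psi^{i}(a_i)=\gamma(a_i)$ and $\psi^{i}(b_i)=\gamma(b_i)$ from Lemma \ref{interpolate}. Continuity on each gap $(a_i,b_i)$ and at interior points of $K$ is inherited from continuity of $\psi^{i}$ and $\gamma$ respectively; at a point of $K$ that is a limit of gap endpoints, the same matching conditions combined with continuity of $\gamma$ do the job. Horizontality is similarly direct: on each gap $(\psi^{i})'$ is horizontal by Lemma \ref{interpolate}, while at each $t_0\in K$ the vector $\gamma'(t_0)$ is horizontal by the selection of $K$.

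The core of the argument, and the step I expect to be the main obstacle, is proving differentiability at a point $t_0\in K$ with derivative $\gamma'(t_0)$; inside a gap differentiability is automatic. The observation I plan to exploit is that the ``bad'' gaps near $t_0$ are scarce: since the intervals $(a_i,b_i)$ are pairwise disjoint in $[m,M]$ and since $\varepsilon_{i}\downarrow 0$, for any $\eta>0$ only finitely many indices satisfy $b_i-a_i\geq \eta$ or $\varepsilon_i\geq \eta$. Hence for all $|h|$ small enough, whenever $t_0+h$ lies in some gap $(a_i,b_i)$ we may assume $b_i-a_i<\eta$, $\varepsilon_i<\eta$, and, for $h>0$ say, $t_0\leq a_i<t_0+h$. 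I would then decompose
\[\Gamma(t_0+h)-\gamma(t_0)-h\gamma'(t_0)=[\gamma(a_i)-\gamma(t_0)-(a_i-t_0)\gamma'(t_0)]+\int_{a_i}^{t_0+h}[(\psi^{i})'(s)-\gamma'(t_0)]\dd s.\]
The first bracket is bounded by $\eta(a_i-t_0)\leq \eta h$ using the uniform Lebesgue-point estimate on $K$ (valid once $h<\delta(\eta)$). In the integrand I would split $(\psi^{i})'(s)-\gamma'(t_0)=[(\psi^{i})'(s)-\gamma'(a_i)]+[\gamma'(a_i)-\gamma'(t_0)]$: Lemma \ref{interpolate} controls the first summand by $C\varepsilon_i<C\eta$, and uniform continuity of $\gamma'|_K$ together with $|a_i-t_0|<h$ controls the second. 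Dividing by $h$, sending $h\to 0$, and then $\eta\to 0$ yields $\Gamma'(t_0)=\gamma'(t_0)$. The easier subcase $t_0+h\in K$ is handled directly by the Lebesgue-point property.

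Continuity of $\Gamma'$ is trivial on open gaps. At $t_0\in K$ and any sequence $t_n\to t_0$, I would split $t_n$ into a subsequence in $K$, handled by uniform continuity of $\gamma'|_K$, and a subsequence lying in gaps $(a_{i_n},b_{i_n})$. For the latter, the finiteness observation forces $\varepsilon_{i_n}\to 0$ and $a_{i_n}\to t_0$, so $|\Gamma'(t_n)-\gamma'(t_0)|\leq|(\psi^{i_n})'(t_n)-\gamma'(a_{i_n})|+|\gamma'(a_{i_n})-\gamma'(t_0)|\leq C\varepsilon_{i_n}+o(1)\to 0$. The genuine difficulty is the finiteness observation itself: once one is convinced that any gaps accumulating at $t_0$ must have small length and small associated $\varepsilon_i$, all remaining estimates are routine interpolations between the gap endpoint and $t_0$.
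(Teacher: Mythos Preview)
Your approach mirrors the paper's: split the increment at the left gap endpoint $a_i$, use the uniform Lebesgue-point estimate for the portion in $K$, and use Lemma \ref{interpolate}(3) together with uniform continuity of $\gamma'|_K$ for the portion inside the gap; continuity of $\Gamma'$ is argued the same way.

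There is, however, one genuine oversight in your finiteness step. The implication ``for all $|h|$ small enough, whenever $t_0+h$ lies in some gap $(a_i,b_i)$ we may assume $\varepsilon_i<\eta$'' fails when $t_0$ is itself a gap endpoint. If $t_0=a_j$, then for every small $h>0$ the point $t_0+h$ lands in the \emph{fixed} gap $(a_j,b_j)$; the associated $\varepsilon_j$ is a fixed positive number and will not be $<\eta$ once you send $\eta\to 0$. Your integrand bound $|(\psi^{j})'(s)-\gamma'(a_j)|\leq C\varepsilon_j$ then yields only $\limsup_{h\downarrow 0}|\Gamma(t_0+h)-\Gamma(t_0)-h\gamma'(t_0)|/h\leq C\varepsilon_j$, not $0$. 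The same defect appears in your continuity argument: with $t_n\to a_j^+$ inside $(a_j,b_j)$ one has $i_n\equiv j$, so $\varepsilon_{i_n}\not\to 0$. The paper avoids this by treating the endpoint case $t_0\in\{a_j,b_j\}$ separately: there one-sided differentiability and one-sided continuity of $\Gamma'$ follow immediately from the fact that $\psi^j$ is $C^1$ on $[a_j,b_j]$ together with the matching conditions $(\psi^j)'(a_j)=\gamma'(a_j)$ and $(\psi^j)'(b_j)=\gamma'(b_j)$. With that short case distinction inserted, your argument is complete and coincides with the paper's.
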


\begin{proof}
Clearly $\Gamma$ is $C^{1}$ on each interval $(a_{i},b_{i})$ with $\Gamma'=(\psi^{i})'$. We check that $\Gamma'(t)$ exists and is equal to $\gamma'(t)$ for $t\in K$.

We first check differentiability from the right. If $t=a_{i}$ for some $i$ then differentiability on the right of $\Gamma$ at $t$ follows from that of $\psi_{i}$ and the agreements $\psi^{i}(a_{i})=\gamma(a_{i})$ and $(\psi^{i})'(a_{i})=\gamma'(a_{i})$. Hence we suppose $t\neq a_{i}$ for any $i$. Let $\delta>0$. If $t+\delta\in K$ then
\[|\Gamma(t+\delta)-\Gamma(t)-\delta\gamma'(t)|=|\gamma(t+\delta)-\gamma(t)-\delta \gamma'(t)|\]
which is small relative to $\delta$, for $\delta$ sufficiently small, by differentiability of $\gamma$ at $t$. Hence we suppose $t+\delta\notin K$ which implies $t< a_{i}<t+\delta<b_{i}$ for some $i\geq 1$; by choosing $\delta$ small we can ensure $i$ is large and hence $\varepsilon_{i}$ is small. In this case we estimate as follows:
\begin{align*}
|\Gamma(t+\delta)-\Gamma(t)-\delta\gamma'(t)|&\leq |\Gamma(t+\delta)-\Gamma(a_{i})-(t+\delta-a_{i}) \gamma'(t)|\\
&\qquad + |\Gamma(a_{i})-\Gamma(t)-(a_{i}-t)\gamma'(t)|\\
&\leq |\psi^{i}(t+\delta)-\psi^{i}(a_{i})-(t+\delta-a_{i})\gamma'(a_{i})|\\
&\qquad + |t+\delta-a_{i}||\gamma'(a_{i})-\gamma'(t)|\\
&\qquad \qquad + |\gamma(a_{i})-\gamma(t)-(a_{i}-t)\gamma'(t)|.
\end{align*}
We observe that the second term is small, relative to $\delta$, by using continuity of $\gamma'|_{K}$ and the simple estimate $|t+\delta-a_{i}|\leq t+\delta-t=\delta$. The third term is small, relative to $\delta$, by differentiability of $\gamma$ at $t$. For the first term we estimate further, using Lemma \ref{interpolate},
\begin{align*}
|\psi^{i}(t+\delta)-\psi^{i}(a_{i})-(t+\delta-a_{i})\gamma'(a_{i})| &= \left| \int_{a_{i}}^{t+\delta} ((\psi^{i})'(s)-\gamma'(a_{i}))\dd s\right|\\
&\leq \int_{a_{i}}^{t+\delta} C\varepsilon_{i} \dd s\\
&\leq C \varepsilon_{i} \delta.
\end{align*}
As indicated above, by choosing $\delta>0$ sufficiently small we can ensure $\varepsilon_{i}$ is also small. This gives the estimate of the final term. Hence $\Gamma$ is differentiable at $t$ from the right with the required derivative $\gamma'(t)$.

Similar arguments, with $t+\delta$ replaced by $t-\delta$, gives differentiability of $\Gamma$ from the left. Hence we deduce that $\Gamma$ is differentiable for $t\in K$ with $\Gamma'(t)=\gamma'(t)$, as claimed.

Clearly the curve $\Gamma$ is everywhere horizontal, since $\gamma$ is horizontal in $K$ and each map $\psi^{i}$ is horizontal in $(a_{i},b_{i})$. We claim that $\Gamma$ is also $C^{1}$. 

Continuity of $\Gamma'$ at points in $[m,M]\setminus K$ is clear since each map $\psi^{i}$ is $C^{1}$. We check continuity of $\Gamma'$ at $t\in K$ from the right. We showed above that $\Gamma'(t)=\gamma'(t)$. If $t=a_{i}$ for some $i$ then continuity of $\Gamma'$ at $t$ follows from continuity of $\psi_{i}'$ and the agreement $\psi_{i}'(a_{i})=\gamma'(a_{i})$. Suppose $t\neq a_{i}$ for any $i$ and let $\delta>0$. If $t+\delta\in K$ then $\Gamma'(t+\delta)=\gamma'(t+\delta)$ which is close to $\gamma'(t)$ for sufficiently small $\delta$ using continuity of $\gamma'|_{K}$. Suppose now that $t+\delta \notin K$. Then we can find $i\geq 1$ such that $t < a_{i}<t+\delta<b_{i}$. As before, if $\delta$ is sufficiently small then we can ensure $\varepsilon_{i}$ is small. In this case we have:
\[|\Gamma'(t+\delta)-\Gamma'(t)|\leq |(\psi^{i})'(t+\delta)-\gamma'(a_{i})|+|\gamma'(a_{i})-\gamma'(t)|\]
which is small for sufficiently small $\delta>0$, using Lemma \ref{interpolate} for the first term and continuity of $\gamma'|_{K}$ for the second. This shows $\Gamma'$ is everywhere continuous on the right; a similar argument gives continuity on the left and concludes the proof.
\end{proof}

Clearly, using Proposition \ref{smoothcurve},
\[\mathcal{L}^{1} (\{t\in [m,M]:\Gamma(t)\neq \gamma(t)\mbox{ or }\Gamma'(t)\neq \gamma'(t)\}) \leq \mathcal{L}^{1}([m,M]\setminus K) <\varepsilon.\]
Proposition \ref{smoothcurve} also states that $\Gamma$ is a $C^{1}$ horizontal curve on $[m,M]$. Extending $\Gamma$ arbitrarily to a $C^{1}$ horizontal curve on $[0,1]$ gives Theorem \ref{lusinheisenberg}.

\section{A Curve Without Approximation in the Engel Group}

We first give the abstract definition of the (first) Engel group, followed by a representation in coordinates \cite{LM10}.

\begin{definition}\label{Engel}
The Engel group $\mathbb{E}$ is the Carnot group whose Lie algebra $\mathcal{E}$ satisfies the decomposition
\[ \mathcal{E}=V_{1}\oplus V_{2}\oplus V_{3},\]
where $V_{1} = \mathrm{span} \{X_{1},X_{2}\}, V_{2}=\mathrm{span}\{X_{3}\}, V_{3}=\mathrm{span}\{X_{4}\}$, and the only non-trivial Lie brackets are
\[ [X_{1}, X_{2}]=X_{3} \text{ and }[X_{1}, X_{3}]=X_{4}.\]
\end{definition}

We represent the Engel group $\mathbb{E}$ by $\mathbb{R}^{4}$ with the vector fields
\begin{equation}\label{engelvectors}
X_{1}=\partial_{1}, \quad X_{2}=\partial_{2}+x_{1}\partial_{3}+\frac{x_{1}^2}{2}\partial_{4},\quad X_{3}=\partial_{3}+x_{1}\partial_{4},\quad X_{4}=\partial_{4}.
\end{equation}
Here $\partial_{i}$ are the coordinate vectors of $\mathbb{R}^4$, which may also be interpreted as operators on differentiable functions. The group law on the Engel group is given by the exponential mapping $\exp\colon \mathcal{E} \to \mathbb{E}$ and the Baker-Campbell-Hausdorff formula. We do not give explicitly the group law but need only note that
\begin{equation}\label{engelmultiplication}
(p_{1},p_{2},p_{3},p_{4})\cdot (0,0,0,t)=(p_{1},p_{2},p_{3},p_{4}+t)
\end{equation}
for any $t\in \mathbb{R}$. This follows directly from the Baker-Campbell-Hausdorff formula.

We can now state our result on the existence of a horizontal curve in the Engel group with no $C^{1}$ horizontal approximation. For emphasis, we recall that a curve $\gamma$ in the Engel group is horizontal if $\gamma'(t)$ is a linear combination of $X_{1}(\gamma(t))$ and $X_{2}(\gamma(t))$ for almost every $t$. In what follows, Lipschitz will mean with respect to the Euclidean metric in both domain and target.

\begin{theorem}\label{nolusin}
Suppose $\varepsilon>0$. Then there exists a Lipschitz horizontal curve $\gamma\colon [0,1]\to \mathbb{R}^{4}$ in the Engel group such that for any $C^{1}$ horizontal curve $\Gamma\colon [0,1]\to \mathbb{R}^{4}$,
\begin{equation}\label{smallagreement}
\mathcal{L}^{1}(\{t\in [0,1]:\gamma(t)=\Gamma(t)\})<\varepsilon.
\end{equation}
\end{theorem}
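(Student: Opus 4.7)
My plan follows the strategy indicated in the introduction. The proof splits into a Cantor-style construction of the bad curve $\gamma$, followed by a short Lebesgue-density argument showing that no $C^{1}$ horizontal curve can agree with $\gamma$ on a set of large measure.

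\textbf{Construction.} Fix a dense open $S \subset [0,1]$ with $\mathcal{L}^{1}(S) < \varepsilon/2$, say a union of small intervals around an enumeration of the rationals, and write $S = \bigsqcup_{i} (a_{i}, b_{i})$ as its decomposition into components. Set $K := [0,1] \setminus S$; this is a nowhere-dense closed set of measure $> 1-\varepsilon/2$, and because it is totally disconnected, between any two distinct points of $K$ there lies a full component $(a_{i}, b_{i})$. I then construct $\gamma\colon [0,1] \to \mathbb{R}^{4}$ so that
\begin{equation*}
\gamma'(t) = X_{2}(\gamma(t)) \qquad \text{for every } t \in K,
\end{equation*}
which in particular forces $\gamma_{1} = 0$ on $K$, while on each $(a_{i},b_{i})$ the curve performs a small horizontal detour that ends with $\gamma_{1} = 0$, $\gamma_{3} = 0$ and with $\gamma_{4}$ strictly decreased. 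A natural detour alternates segments along $X_{1}$, $-X_{2}$, $-X_{1}$, together with a compensating $X_{2}$ piece: during the $-X_{2}$ phase one has $\gamma_{1} > 0$, so $\gamma_{4}' = -\gamma_{1}^{2}/2 < 0$ drops $\gamma_{4}$ by some amount $\Delta_{i} > 0$; the outer $\pm X_{1}$ pieces restore $\gamma_{1}$, and the $+X_{2}$ piece arranges that the net $\gamma_{2}$-advance on $(a_{i},b_{i})$ equals $b_{i} - a_{i}$ and that $\gamma_{3}$ returns to $0$. Choosing the amplitudes comparable to $b_{i} - a_{i}$ keeps $\gamma$ Lipschitz uniformly in $i$. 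Then for any $s < t$ in $K$ the interval $(s,t)$ contains some $(a_{i},b_{i})$; since $\gamma_{4}' = 0$ pointwise on $K$ and each detour contributes $-\Delta_{j} \leq 0$, we conclude $\gamma_{4}(t) < \gamma_{4}(s)$, so $\gamma_{4}$ is strictly decreasing on $K$.

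\textbf{No density points.} Fix any $C^{1}$ horizontal curve $\Gamma$ and set
\begin{equation*}
E := \{t \in K : \Gamma(t) = \gamma(t)\}.
\end{equation*}
It suffices to show $E$ has no Lebesgue density points, for then $\mathcal{L}^{1}(E) = 0$ by the Lebesgue density theorem, and
\begin{equation*}
\mathcal{L}^{1}(\{t \in [0,1] : \Gamma(t) = \gamma(t)\}) \leq \mathcal{L}^{1}(E) + \mathcal{L}^{1}(S) < \varepsilon.
\end{equation*}
Suppose for contradiction $t_{0} \in E$ is a density point. Pick $t_{n} \in E$ with $t_{n} \to t_{0}$ and $t_{n} \neq t_{0}$; since $\Gamma$ is differentiable at $t_{0}$ and $\Gamma(t_{n}) = \gamma(t_{n})$,
\begin{equation*}
\Gamma'(t_{0}) = \lim_{n \to \infty} \frac{\gamma(t_{n}) - \gamma(t_{0})}{t_{n} - t_{0}} = \gamma'(t_{0}) = X_{2}(\gamma(t_{0})),
\end{equation*}
the second equality using differentiability of $\gamma$ at $t_{0} \in K$. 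Writing $\Gamma'(u) = a(u) X_{1}(\Gamma(u)) + b(u) X_{2}(\Gamma(u))$, this gives $b(t_{0}) = 1$, so by continuity $b(u) > 0$ throughout some neighbourhood $U$ of $t_{0}$. By the form of $X_{2}$ in \eqref{engelvectors},
\begin{equation*}
\Gamma_{4}'(u) = b(u)\, \frac{\Gamma_{1}(u)^{2}}{2} \geq 0 \qquad \text{for all } u \in U,
\end{equation*}
so $\Gamma_{4}$ is nondecreasing on $U$. By the density assumption we find $s, t \in E \cap U$ with $s < t$; then $\gamma_{4}(t) = \Gamma_{4}(t) \geq \Gamma_{4}(s) = \gamma_{4}(s)$, contradicting the strict decrease of $\gamma_{4}$ on $K$.

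\textbf{Main obstacle.} The density-point argument is short once $\gamma$ is in hand; the technical heart of the proof is the construction. One must simultaneously match endpoint positions on each $(a_{i}, b_{i})$ so that $\gamma$ is continuous, restore $\gamma_{1} = \gamma_{3} = 0$ at $b_{i}$ so that $\gamma'(t) = X_{2}(\gamma(t))$ can hold on $K$, keep $\gamma$ Lipschitz uniformly in $i$, and secure a strict drop $\Delta_{i} > 0$ in $\gamma_{4}$ on each detour. Balancing these competing constraints across a countable family of small intervals, and then checking pointwise differentiability of $\gamma$ at every $t \in K$ (needed to legitimize the density-point computation), is where the real work lies.
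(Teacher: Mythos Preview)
Your approach mirrors the paper's exactly: a fat nowhere-dense $K$ on which $\gamma$ follows $X_2$, horizontal loops in the gaps forcing $\gamma_4$ strictly down on $K$, then a Lebesgue-density argument showing that near any agreement point $\Gamma_4$ must be nondecreasing. The only real difference is that the paper obtains the loops non-constructively via the Ball-Box theorem (curves $\Phi_\delta$ joining $0$ to $(0,0,0,-K\delta^6)$), while you sketch an explicit piecewise detour.

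One concrete error in your sketch: the compensating $+X_2$ piece cannot restore $\gamma_3$ to $0$. When $\gamma_1=0$ the field $X_2=\partial_2+x_1\partial_3+\tfrac{x_1^2}{2}\partial_4$ has no $\partial_3$ component, so your four-segment loop $X_1,\,-X_2,\,-X_1,\,X_2$ ends with $\gamma_3$ shifted by $-\sigma\tau\neq 0$. Fortunately this is harmless: the requirement $\gamma'=X_2(\gamma)$ on $K$ only needs $\gamma_1=0$ there (so that $X_2(\gamma)=(0,1,0,0)$), not $\gamma_3=0$, and neither horizontality nor the density argument ever constrains $\gamma_3$. So you can simply drop the $\gamma_3$-restoration requirement from your list of constraints; alternatively, make the loop pass symmetrically through $\gamma_1=\tau$ and $\gamma_1=-\tau$ (e.g.\ $X_1,\,-X_2,\,-2X_1,\,-X_2,\,X_1$), which cancels the $\gamma_3$ contribution while still strictly decreasing $\gamma_4$.
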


To prove Theorem \ref{nolusin} we first construct an absolutely continuous horizontal curve $\gamma$ with many small spirals which give movement in the $-\partial_{4}$ direction. We then show, by contradiction, that a $C^{1}$ horizontal curve cannot follow such a path. Given $\varepsilon>0$, choose a sequence $\varepsilon_{i}>0$ and $q_{i}\in \mathbb{Q}\cap (0,1)$ such that:
\begin{itemize}
\item The intervals $(q_{i},q_{i}+\varepsilon_{i})$ are disjoint.
\item The set $S=\bigcup_{i\geq 1} (q_{i},q_{i}+\varepsilon_{i})$ is dense in $(0,1)$.
\item The inequality $\sum_{i} \varepsilon_{i}<\varepsilon$ holds.
\end{itemize}

For each $\delta>0$ we fix a $\delta$-Lipschitz horizontal curve $\Phi_{\delta}\colon [0,\delta] \to \mathbb{R}^{4}$ such that $\Phi_{\delta}(0)=(0,0,0,0)$ and $\Phi_{\delta}(\delta)=\exp(-K\delta^{6} X_{4})=(0,0,0,-K\delta^6)$ for some sufficiently small constant $K>0$. Existence of such curves follows from the Ball-Box Theorem \cite{Gro96}.

We define the horizontal and vertical translations, up to time $t\in [0,1]$, by
\[H(t)=\sum_{q_{i}<t} \varepsilon_{i},\quad V(t)=\sum_{q_{i}<t} K\varepsilon_{i}^6. \]

We define $\gamma \colon [0,1]\to \mathbb{R}^{4}$ by the following formula:
\[ \gamma(t) = \begin{cases} (0,q_{i}-H(q_{i}),0,-V(q_{i}))\cdot \Phi_{\varepsilon_{i}}(t-q_{i}) , & \mbox{if }t\in (q_{i},q_{i}+\varepsilon_{i}), i\geq 1,\\
(0,t-H(t),0,-V(t)) & \mbox{if }t\notin S. \end{cases}\]

Note that \eqref{engelmultiplication} implies $\gamma$ is continuous. Since $\Phi_{\varepsilon_{i}}$ is $\varepsilon_{i}$-Lipschitz and group translations are smooth, it follows that $\gamma$ is $C\varepsilon_{i}$-Lipschitz in each interval $(q_{i},q_{i}+\varepsilon_{i})$, for some constant $C$. 

The important properties of $\gamma$ are given in the following proposition; after we prove the proposition one can forget about the precise formula for $\gamma$. Recall that if $A\subset \mathbb{R}$ is measurable then almost every point $t\in A$ is a Lebesgue density point of $A$, which means that $\mathcal{L}^{1}((t-r,t+r)\cap A)/2r\to 1$ as $r\downarrow 0$.

\begin{proposition}\label{badcurveproperties}
The curve $\gamma\colon [0,1]\to \mathbb{R}^{4}$ is a Lipschitz horizontal curve in the Engel group such that:
\begin{enumerate}
\item $\gamma'=(0,1,0,0)$ almost everywhere in $(0,1)\setminus S$.
\item $\gamma(q_{i}+\varepsilon_{i})=\gamma(q_{i})-(0,0,0,K\varepsilon_{i}^6)$ for every $i\geq 1$.
\end{enumerate}
In particular, the component $\gamma_{4}$ is strictly decreasing in $(0,1)\setminus S$: if $a, b\notin S$ and $a<b$ then $\gamma_{4}(b)<\gamma_{4}(a)$.
\end{proposition}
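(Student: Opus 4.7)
The plan is to verify the assertions of the proposition in an order that isolates the genuine analytic content. I would begin with the easy algebraic identities, then establish Lipschitz continuity and horizontality by case analysis, and finally prove the derivative formula on $(0,1)\setminus S$ at density points, which will be the main technical step.

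First, I would check that $\gamma$ is continuous. The only potential issue is at the interface points $q_i$ and $q_i+\varepsilon_i$; at each of these, the two branches of the definition match using $\Phi_{\varepsilon_i}(0)=0$, the identity $\Phi_{\varepsilon_i}(\varepsilon_i) = (0,0,0,-K\varepsilon_i^6)$, and \eqref{engelmultiplication}, together with the bookkeeping relations $H(q_i+\varepsilon_i) = H(q_i)+\varepsilon_i$ and $V(q_i+\varepsilon_i) = V(q_i)+K\varepsilon_i^6$. The same computation gives property (ii) directly, and once (ii) is in hand, the strict decrease of $\gamma_4$ on $(0,1)\setminus S$ is immediate: for $a<b$ with $a,b\notin S$, the explicit formula yields $\gamma_4(b)-\gamma_4(a) = -\sum_{a\leq q_i<b} K\varepsilon_i^6$, and density of $S$ forces at least one such $q_i$ to exist.

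For Lipschitz continuity and horizontality on $S$, I would note that on each interval $(q_i,q_i+\varepsilon_i)$ the curve $\gamma$ is the left translate of $\Phi_{\varepsilon_i}$ by a fixed base point; since left translations are smooth diffeomorphisms preserving horizontality, and the relevant base points lie in a common compact set, this restriction is horizontal and $C\varepsilon_i$-Lipschitz with $C$ independent of $i$. For $s,t\in(0,1)\setminus S$ with $s<t$, the key observation is that no $S$-interval can straddle $s$ or $t$, so $H(t)-H(s) = \mathcal{L}^1(S\cap(s,t))\leq t-s$ and $V(t)-V(s)\leq K(H(t)-H(s))$ (using $\varepsilon_i\leq 1$), whence $|\gamma(t)-\gamma(s)|\leq C(t-s)$ follows directly from the explicit formula. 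The mixed cases reduce to the two above by inserting at most two interface points of the form $q_i$ or $q_i+\varepsilon_i$ between $s$ and $t$ and applying the triangle inequality.

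The main obstacle is proving property (i), which I would establish at every Lebesgue density point $t$ of $(0,1)\setminus S$. For $s>t$ close to $t$, I split into two cases. If $s\notin S$, the explicit formula gives $\gamma(s)-\gamma(t) = (0,(s-t)-(H(s)-H(t)),0,-(V(s)-V(t)))$, and the density condition, combined with $H(s)-H(t) = \mathcal{L}^1(S\cap(t,s))$ and $V(s)-V(t)\leq K(H(s)-H(t))$, shows both correction terms are $o(s-t)$. If instead $s\in(q_k,q_k+\varepsilon_k)$, I decompose $\gamma(s)-\gamma(t) = [\gamma(s)-\gamma(q_k)]+[\gamma(q_k)-\gamma(t)]$ with $q_k\notin S$. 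Since $\Phi_{\varepsilon_k}$ is $\varepsilon_k$-Lipschitz with $\Phi_{\varepsilon_k}(0)=0$, the first bracket has Euclidean norm at most $C\varepsilon_k\eta$ with $\eta := s-q_k$, and the inclusion $(q_k,s)\subset S\cap(t,s)$ forces $\eta\leq\mathcal{L}^1(S\cap(t,s)) = o(s-t)$, so this bracket is $o(s-t)$. The second bracket reduces to the previous case and equals $(0,q_k-t,0,0)+o(s-t) = (0,s-t,0,0)+o(s-t)$ since $q_k-t = (s-t)-\eta$. The one-sided limit from the left is symmetric, yielding $\gamma'(t) = (0,1,0,0)$; horizontality at $t$ then follows from $\gamma_1(t)=0$ and \eqref{engelvectors}, since $X_2(\gamma(t)) = \partial_2 = (0,1,0,0)$. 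The subtle point is that $s$ may sit inside a spiral whose size $\varepsilon_k$ is much larger than $s-t$, so the spiral contribution cannot be controlled by $\varepsilon_k$ alone; the density condition rescues the estimate by bounding the penetration depth $\eta$ by $o(s-t)$, and this single bound simultaneously controls both the spiral error and the replacement of $q_k-t$ by $s-t$.
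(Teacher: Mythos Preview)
Your proposal is correct and follows essentially the same route as the paper: both arguments verify (2) directly from the bookkeeping identities for $H$ and $V$, establish the $C\varepsilon_i$-Lipschitz behaviour on spirals via left translation, and prove (1) at Lebesgue density points of $(0,1)\setminus S$ by controlling the $S$-contribution in $(t,s)$ through $\mathcal{L}^1(S\cap(t,s))=o(s-t)$. The only organisational differences are that the paper treats $\gamma_1',\gamma_3'$ separately (at \emph{all} non-spiral points, using that only finitely many $\varepsilon_i$ exceed a given $\eta$) before invoking density for $\gamma_2',\gamma_4'$, and deduces Lipschitz from absolute continuity plus bounded derivative rather than by your direct case split---your unified treatment is slightly cleaner.
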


\begin{proof}
We begin by showing that $\gamma_{1}'(t)=\gamma_{3}'(t)=0$ whenever $t\notin [q_{i}, q_{i}+\varepsilon_{i}]$ for any $i$; the argument is the same for each coordinate. Given such a point $t$ and $\eta>0$, choose $\delta>0$ sufficiently small so that $\varepsilon_{i}<\eta$ whenever $q_{i}\in (t,t+\delta)$. We estimate, since $\gamma$ is $C\varepsilon_{i}$-Lipschitz in $(q_{i},q_{i}+\varepsilon_{i})$,
\begin{align*}
|\gamma_{1}(t+\delta)-\gamma_{1}(t)| &\leq \sum_{t<q_{i}<t+\delta} |\gamma_{1}(\min(q_{i}+\varepsilon_{i},t+\delta))-\gamma_{1}(q_{i})|\\
&\leq \sum_{t<q_{i}<t+\delta} C \varepsilon_{i} ( \min(q_{i}+\varepsilon_{i},t+\delta)-q_{i})\\
&\leq C\eta \sum_{t<q_{i}<t+\delta} ( \min(q_{i}+\varepsilon_{i},t+\delta)-q_{i})\\
&\leq C\eta \delta.
\end{align*}
This shows differentiability on the right and a similar argument shows differentiability on the left. Hence $\gamma_{1}$ is differentiable at $t$ with derivative $0$. Exactly the same argument gives the statement for $\gamma_{3}$.

We now verify that $\gamma_{2}'=1$ almost everywhere in $(0,1)\setminus S$, a similar argument will give the statement for $\gamma_{4}'$. Let $t$ be a Lebesgue density point of $(0,1)\setminus S$, which implies in particular that $t\neq q_{i}$ for any $i$. Given $0<\eta<1$, choose $\delta>0$ sufficiently small so that:
\begin{itemize}
\item $\varepsilon_{i}<\eta$ whenever $t<q_{i}<t+\delta$.
\item $\mathcal{L}^1(S\cap (t,t+2\delta))<\eta \delta$.
\end{itemize}

We first observe that if $t<q_{i}<t+\delta$ then $\varepsilon_{i}<\delta$: if instead $\varepsilon_{i}\geq \delta$ then,
\[(q_{i},q_{i}+\delta) \subset (q_{i},q_{i}+\varepsilon_{i}) \cap (t,t+2\delta) \subset S\cap (t,t+2\delta)\]
which implies
\[\mathcal{L}^1(S\cap (t,t+2\delta))\geq \delta,\]
contradicting the choice of $\delta$.
Hence $(q_{i},q_{i}+\varepsilon_{i})\subset (t,t+2\delta)$ for every $t< q_{i}<t+\delta$. 

We next estimate changes in $H$, using the fact that $t\neq q_{i}$ for any $i$,
\begin{align*}
|H(t+\delta)-H(t)|= \sum_{t<q_{i}<t+\delta} \varepsilon_{i}&=  \sum_{t<q_{i}<t+\delta} \mathcal{L}^1 (q_{i},q_{i}+\varepsilon_{i})\\
&= \mathcal{L}^1 \left( \bigcup_{t<q_{i}<t+\delta} (q_{i},q_{i}+\varepsilon_{i})\right)\\
&\leq \mathcal{L}^1 (S\cap (t,t+2\delta))\\
&<\eta \delta.
\end{align*}

Using the above calculation, and the fact $\gamma$ is $C\varepsilon_{i}$-Lipschitz in $(a_{i},b_{i})$, we now estimate the increment of $\gamma_{2}$:
\begin{align*}
|\gamma_{2}(t+\delta)-\gamma_{2}(t)-\delta| & \leq |H(t+\delta)-H(t)|\\
&\qquad + \sum_{t<q_{i}<t+\delta} |\gamma_{2}(\min(q_{i}+\varepsilon_{i},t+\delta))-\gamma_{2}(q_{i})|\\
&\leq \eta\delta + \sum_{t<q_{i}<t+\delta} C\varepsilon_{i} ( \min(q_{i}+\varepsilon_{i},t+\delta)-q_{i})\\
&\leq \eta\delta + C\eta \sum_{t<q_{i}<t+\delta} ( \min(q_{i}+\varepsilon_{i},t+\delta)-q_{i})\\
&\leq C\eta \delta.
\end{align*}

This shows differentiability of $\gamma_{2}$ on the right and a similar argument shows differentiability of $\gamma_{2}$ on the left. Hence $\gamma_{2}$ is differentiable at $t$ with $\gamma_{2}'=1$. A similar argument, with $H$ replaced by $V$, gives the claim for $\gamma_{4}$. We have now shown (1).

We now know that $\gamma(t)=(0,t-H(t),0,-V(t))$ and $\gamma'(t)=(0,1,0,0)$ at almost every point $t\in [0,1]\setminus S$. Hence, by examining the form of the horizontal vector fields in the Engel group \eqref{engelvectors}, it is clear that $\gamma'$ is a horizontal vector at almost every point of $[0,1]\setminus S$. Recall next that the maps $\Phi_{\varepsilon_{i}}$ are Lipschitz horizontal curves. Since group translations are smooth and send horizontal curves to horizontal curves, it follows that $\gamma$ is a Lipschitz horizontal curve when restricted to any subinterval of $S$. Hence $\gamma'$ exists and is a horizontal vector at almost every point of $[0,1]$.

To prove (2) we simply observe that for each $i\geq 1$:
\[\gamma_{2}(q_{i}+\varepsilon_{i})-\gamma_{2}(q_{i}) = q_{i}+\varepsilon_{i}-H(q_{i}+\varepsilon_{i})-q_{i}+H(q_{i})=0,\]
and
\[\gamma_{4}(q_{i}+\varepsilon_{i})-\gamma_{4}(q_{i}) = -V(q_{i}+\varepsilon_{i})+V(q_{i})=-K\varepsilon_{i}^6.\]

To complete the proof of the proposition, we check that $\gamma$ is absolutely continuous. To do this we verify that increments $\gamma(t_{2})-\gamma(t_{1})$ are given by integrating $\gamma'$ from $t_{1}$ to $t_{2}$. We know that $\Phi_{\varepsilon_{i}}$ and hence $\gamma$ are absolutely continuous curves in the subintervals of $S$. Hence we may assume that $t_{1}, t_{2}\notin [q_{i},q_{i}+\varepsilon_{i}]$ for any $i$. We now calculate, using the absolute continuity of $\gamma$ in the subintervals of $S$,
\begin{align*}
\int_{t_{1}}^{t_{2}} \gamma'(t)\dd t &= \int_{(t_{1},t_{2})\setminus S} (0,1,0,0) \dd t + \sum_{t_{1}<q_{i}<t_{2}} (\gamma(q_{i}+\varepsilon_{i})-\gamma(q_{i}))\\
&= (0,\mathcal{L}^{1}((t_{1},t_{2})\setminus S),0,0) + (0,0,0,\sum_{t_{1}<q_{i}<t_{2}} (-K\varepsilon_{i}^6))
\end{align*}
Our assumption $t_{1}, t_{2}\notin [q_{i},q_{i}+\varepsilon_{i}]$ for any $i$ implies that if $(q_{i},q_{i}+\varepsilon_{i})\subset S$ intersects $(t_{1}, t_{2})$, then $(q_{i},q_{i}+\varepsilon_{i})\subset (t_{1},t_{2})$. Hence we may calculate the second component as follows:
\begin{align*}
\mathcal{L}^{1}((t_{1},t_{2})\setminus S) &=(t_{2}-t_{1})-\mathcal{L}^{1}((t_{1},t_{2})\cap S)\\
&= (t_{2}-t_{1})-\sum_{t_{1}<q_{i}<t_{2}} \mathcal{L}^{1}(q_{i},q_{i}+\varepsilon_{i})\\
&= (t_{2}-t_{1})-\sum_{t_{1}<q_{i}<t_{2}} \varepsilon_{i}\\
&= (t_{2}-t_{1})- (H(t_{2})-H(t_{1}))\\
&= \gamma_{2}(t_{2})-\gamma_{2}(t_{1}).
\end{align*}

For the fourth component we have:
\[ \sum_{t_{1}<q_{i}<t_{2}} (-K\varepsilon_{i}^6)=-(V(t_{2})-V(t_{1}))=\gamma_{4}(t_{2})-\gamma_{4}(t_{1}).\]

Hence
\[\int_{t_{1}}^{t_{2}} \gamma'(t)\dd t= \gamma(t_{2})-\gamma(t_{1}),\]
so $\gamma$ is absolutely continuous. Since $\gamma$ is absolutely continuous with a derivative bounded almost everywhere, we conclude that $\gamma$ is Lipschitz.
\end{proof}

We now proceed by contradiction. Suppose $\Gamma\colon [0,1]\to \mathbb{R}^{4}$ is a $C^{1}$ horizontal curve such that \eqref{smallagreement} fails, namely:
\begin{equation}\label{smallagreementfails}
\mathcal{L}^{1}(\{t\in [0,1]:\Gamma(t)=\gamma(t)\}) \geq \varepsilon.
\end{equation}
Then, since $\mathcal{L}^{1}(S)<\varepsilon$, the set
\[A=\{t\in [0,1] \setminus S: \Gamma(t)=\gamma(t)\}\]
has positive Lebesgue measure. 

Almost every point $t_{0}$ of $A$ is a Lebesgue density point of $A$ and, by locality of the derivative, satisfies $\Gamma_{2}'(t_{0})=\gamma_{2}'(t_{0})=1$. Fix such a point $t_{0} \in A$. Using continuity of $\Gamma_{2}'$ at $t_{0}$, we fix $\delta>0$ sufficiently small so that
\[\Gamma_{2}'(t)\geq 0 \mbox{ for all }t\in (t_{0},t_{0}+\delta)\]
and note that, since $t_{0}$ is a Lebesgue density point of $A$, we necessarily have
\begin{equation}\label{setAisbig}
\mathcal{L}^{1}( A\cap (t_{0},t_{0}+\delta) )>0.
\end{equation}

Using the fact that $\Gamma_{2}$ is increasing in $(t_{0},t_{0}+\delta)$ and $\Gamma$ is a horizontal curve, we will now deduce that $\Gamma_{4}\geq \Gamma(t_{0})$. Notice that the behaviour of $\Gamma_{4}$ is substantially different to that of $\gamma_{4}$ in Proposition \ref{badcurveproperties}.

\begin{proposition}\label{smoothcurveproperties}
For any $t\in (t_{0},t_{0}+\delta)$ it holds that
\[\Gamma_{4}(t)\geq \Gamma_{4}(t_{0}).\]
\end{proposition}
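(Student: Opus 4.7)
The proof should be very short and rest on directly examining what the horizontality condition forces on $\Gamma_4'$. My plan is as follows.

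First, I would write out the horizontality condition for $\Gamma$ in coordinates. Since $\Gamma$ is horizontal, there exist continuous functions $a, b \colon [0,1] \to \mathbb{R}$ (continuous because $\Gamma'$ is continuous and $X_1, X_2$ are linearly independent everywhere) such that
\[
\Gamma'(t) = a(t)\, X_1(\Gamma(t)) + b(t)\, X_2(\Gamma(t)).
\]
Plugging in the explicit form \eqref{engelvectors} of $X_1$ and $X_2$ and reading off each coordinate, this becomes
\[
\Gamma_1'(t) = a(t), \qquad \Gamma_2'(t) = b(t), \qquad \Gamma_3'(t) = b(t)\,\Gamma_1(t), \qquad \Gamma_4'(t) = \tfrac{1}{2} b(t)\, \Gamma_1(t)^2.
\]
The crucial identity is the last one, which can be rewritten as
\[
\Gamma_4'(t) = \tfrac{1}{2}\, \Gamma_2'(t)\, \Gamma_1(t)^2.
\]

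Next, I would simply combine this with the sign hypothesis on $\Gamma_2'$. By the choice of $\delta$ just before the statement, we have $\Gamma_2'(t) \geq 0$ for all $t \in (t_0, t_0 + \delta)$. Since $\Gamma_1(t)^2 \geq 0$ is automatic, the identity above gives
\[
\Gamma_4'(t) \geq 0 \qquad \text{for all } t \in (t_0, t_0 + \delta).
\]
Because $\Gamma$ is $C^1$, $\Gamma_4$ is continuously differentiable on $[0,1]$ with $\Gamma_4' \geq 0$ on $(t_0, t_0 + \delta)$, hence $\Gamma_4$ is non-decreasing on $[t_0, t_0 + \delta)$. In particular, $\Gamma_4(t) \geq \Gamma_4(t_0)$ for every $t \in (t_0, t_0 + \delta)$, which is the desired conclusion.

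There is essentially no obstacle here: the whole point of the construction is that the horizontal vector field $X_2$ in the Engel group produces movement in the $+\partial_4$ direction with a non-negative coefficient $\tfrac{1}{2} x_1^2$, so any horizontal curve whose $b$-coefficient (equivalently $\Gamma_2'$) has constant sign can only move monotonically in the $\Gamma_4$ coordinate. This sharply contrasts with $\gamma$ from Proposition \ref{badcurveproperties}, where $\gamma_4$ strictly decreases on $(0,1) \setminus S$ even though $\gamma_2' = 1$ almost everywhere there; the asymmetry comes from the fact that $\gamma$ is allowed to use the spirals $\Phi_{\varepsilon_i}$, which a $C^1$ curve with $\Gamma_2' \geq 0$ on the whole interval cannot imitate.
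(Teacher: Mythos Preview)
Your proof is correct and follows essentially the same argument as the paper: write $\Gamma'(t)=a(t)X_1(\Gamma(t))+b(t)X_2(\Gamma(t))$, read off $\Gamma_2'=b$ and $\Gamma_4'=\tfrac{1}{2}b\,\Gamma_1^2$, and use $\Gamma_2'\geq 0$ on $(t_0,t_0+\delta)$ to conclude $\Gamma_4'\geq 0$ there and hence $\Gamma_4(t)\geq \Gamma_4(t_0)$.
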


\begin{proof}
Since $\Gamma$ is a $C^1$ horizontal curve, there are $a, b\colon [0,1]\to \mathbb{R}$ such that, for every $t\in[0,1]$:
\begin{align*}
\Gamma'(t) &= a(t)X_{1}(\Gamma(t)) + b(t)X_{2}(\Gamma(t))\\
&= (a(t),b(t),b(t)\Gamma_{1}(t),b(t)\Gamma_{1}(t)^{2}/2).
\end{align*}
Since $\Gamma_{2}'(t)\geq 0$ for all $t\in (t_{0}, t_{0}+\delta)$ it follows $b(t)\geq 0$ and consequently $\Gamma_{4}'(t)\geq 0$ for all $t\in (t_{0},t_{0}+\delta)$. It follows that
\[\Gamma_{4}(t)=\Gamma_{4}(t_{0})+\int_{t_{0}}^{t} \Gamma_{4}'(t) \dd t \geq \Gamma_{4}(t_{0})\]
for any $t\in (t_{0},t_{0}+\delta)$, as required.
\end{proof}

We now observe that the different behaviour of $\Gamma_{4}$ and $\gamma_{4}$ shows the curves cannot coincide. By definition, $t_{0}\in A$ which implies $\Gamma(t_{0})=\gamma(t_{0})$. Further, using \eqref{setAisbig}, it holds that
\[ \mathcal{L}^{1}(\{t\in (t_{0},t_{0}+\delta)\setminus S: \Gamma(t)=\gamma(t)\})>0. \]
However, for any $t\in (t_{0},t_{0}+\delta)\setminus S$ we have:
\begin{itemize}
\item $\gamma_{4}(t)<\gamma_{4}(t_{0})$ by Proposition \ref{badcurveproperties}.
\item $\Gamma_{4}(t)\geq \Gamma_{4}(t_{0})$ by Proposition \ref{smoothcurveproperties}.
\end{itemize}
Hence for any $t\in (t_{0},t_{0}+\delta)\setminus S$ we obtain,
\[ \Gamma_{4}(t)\geq \Gamma_{4}(t_{0})=\gamma_{4}(t_{0})>\gamma_{4}(t)\]
and in particular $\Gamma_{4}(t)\neq \gamma_{4}(t)$. This gives the desired contradiction to the existence of the curve $\Gamma$ satisfying \eqref{smallagreementfails}, yielding Theorem \ref{nolusin}.

\end{document}